\documentclass[11pt]{article}

\title{Data-Driven System Level Synthesis}

%


\usepackage{amsmath}
\usepackage{amssymb}
\usepackage{tikz}
\usepackage{pgfplots}
\usepackage{graphicx}
\usepackage{epstopdf}
\usepackage[labelfont=bf,font=footnotesize]{caption}
\usepackage[labelfont=bf,font=footnotesize]{subcaption}
\usepackage{xcolor}
\allowdisplaybreaks  
\usepackage{cite}
\usepackage{algorithmicx}
\usepackage{algpseudocode}
\usepackage{algorithm}
\usepackage{bbm}
\usepackage{makecell}
\usepackage{authblk}
\usepackage{fullpage,etoolbox}
\usepackage{hyperref}

\allowdisplaybreaks

\usepackage{amsthm}


\newtheorem{definition}{Definition} 
\newtheorem{theorem}{Theorem} 
\newtheorem{corollary}{Corollary} 
\newtheorem{remark}{Remark}
\newtheorem{lemma}{Lemma}




\usepackage{enumitem}
\setlist{nosep}

\author[1]{Anton Xue}
\author[2]{Nikolai Matni}
\affil[1]{Department of Computer and Information Science, University of Pennsylvania}
\affil[2]{Department of Electrical and Systems Engineering, University of Pennsylvania}

\date{\today}

\usepackage{amsfonts}
\usepackage{amsmath}
\usepackage{wrapfig}
\usepackage{amssymb}
\usepackage{mathdots}
\usepackage{enumitem}
\usepackage[font=small,labelfont=bf,figurename=Fig.]{caption}
\usepackage{comment}
\usepackage{float}
\usepackage{graphicx}
\usepackage{letltxmacro}
\usepackage{listings}
\usepackage{mathtools}

\let\parens\undefined
\newcommand{\parens}[1]{{\left(#1\right)}}

\let\braces\undefined
\newcommand{\braces}[1]{{\left\{#1\right\}}}

\let\norm\undefined
\newcommand{\norm}[1]{{\left\lVert#1\right\rVert}}

\let\mrm\undefined
\newcommand{\mrm}[1]{{\mathrm{#1}}}

\let\trm\undefined
\newcommand{\trm}[1]{{\textrm{#1}}}

\let\mbb\undefined
\newcommand{\mbb}[1]{{\mathbb{#1}}}

\let\mbf\undefined
\newcommand{\mbf}[1]{{\mathbf{#1}}}

\let\mcal\undefined
\newcommand{\mcal}[1]{{\mathcal{#1}}}


\let\minimize\undefined
\DeclareMathOperator*{\minimize}{minimize}



\let\rank\undefined
\DeclareMathOperator{\rank}{rank}














\newtheorem*{solution*}{Solution}

\lstdefinestyle{plainsty}{
  basicstyle=\small\ttfamily,
  language=C,
  xleftmargin=\parindent,
  aboveskip=1em,
  belowskip=1em,
  showspaces=false,
  showstringspaces=false,
  keywordstyle = {},
}

\lstnewenvironment{pcode}{\lstset{style=plainsty}}{}

\let\code\undefined
\newcommand{\code}[1]{\texttt{#1}}




\let\PHI\undefined
\newcommand{\PHI}{\mathbf{\Phi}}

\newcommand{\xt}{x_{[0,T-1]}}
\newcommand{\xtt}{\tilde x_{[0,T-1]}}
\newcommand{\xL}{x_{[0,L-1]}}

\newcommand{\ut}{u_{[0,T-1]}}
\newcommand{\uL}{u_{[0,L-1]}}

\newcommand{\wL}{w_{[-1,L-2]}}

\newcommand{\R}{\mathbb{R}}
\newcommand{\dsuper}[3]{#1^{#2,#3}}

\newcommand{\iid}{\overset{\tiny\text{iid}}{\sim{}}}

\newcommand{\DDelta}{\boldsymbol{\Delta}}
\newcommand{\OZ}{\mcal{Z}_{L}}
\newcommand{\twonorm}[1]{\lVert #1 \rVert_2}

\newcommand{\ed}{\overset{\mathrm{dist}}{=}}

\usepackage{times}




  

\begin{document}

 \setlength{\intextsep}{6pt}%
 \setlength{\columnsep}{6pt}%
\setlength{\belowcaptionskip}{-15pt}


\maketitle

\begin{abstract}
We establish data-driven versions of the System Level Synthesis (SLS) parameterization of achievable closed-loop system responses for a linear-time-invariant system over a finite-horizon.  Inspired by recent work in data-driven control that leverages tools from behavioral theory, we show that optimization problems over system-responses can be posed using only libraries of past system trajectories, without explicitly identifying a system model.  We first consider the idealized setting of noise free trajectories, and show an exact equivalence between traditional and data-driven SLS.  We then show that in the case of a system driven by process noise, tools from robust SLS can be used to characterize the effects of noise on closed-loop performance, and further draw on tools from matrix concentration to show that a simple trajectory averaging technique can be used to mitigate these effects.  We end with numerical experiments showing the soundness of our methods.
\end{abstract}


\numberwithin{equation}{section}

As the systems we control become increasingly complex, dynamic, heterogeneous, and difficult to model, the intricate and detailed system models needed by traditional tools from robust and optimal control, derived either from first principles or expensive and time-consuming system identification methods, will no longer be available.  Fortunately, contemporary systems are also inherently \emph{data-rich}, allowing for data-driven control algorithms to be deployed, wherein techniques rooted in machine learning take the place of the traditional system identification step.

Focusing on the control of an unknown linear system, approaches including identify-then-control \cite{dean2019sample,mania2019certainty}, policy gradient, \cite{fazel2018global,malik2019derivative,furieri2020learning}, adaptive methods based on robust control \cite{dean2018regret}, and online-learning \cite{simchowitz2020improper, hazan2020nonstochastic} have been considered.  More closely related to this paper, are methods based on data-driven methods \cite{de2019persistency,coulson2019data} rooted in behavioral theory \cite{willems1986from,willems1997introduction,willems2005note} which do not estimate the system dynamics, and instead rely directly on libraries of past system trajectories.  For a more exhaustive overview of recent developments please see \cite{matni2019self} and \cite{recht2019tour} for tutorials aimed at audiences from control-theoretic and machine learning backgrounds, respectively.

This paper is motivated by the results presented in \cite{de2019persistency,coulson2019data,coulson2020distributionally,coulson2019regularized,van2020willems,rotulo2019data}.   Broadly, these papers leverage the behavioral framework of \cite{willems1997introduction}, which allows for the achievable input/output behavior of a system to be characterized in terms of a library of past trajectories, assuming certain \emph{persistence of excitation} conditions are satisfied.  For example, in \cite{coulson2019data}, the authors show how trajectory tracking in output-feedback based model-predictive-control (MPC) \cite{garcia1989model, borrelli2017predictive} can be posed as an optimization problem over a library of past system trajectories, with follow up work establishing connections to distributionally robust programming \cite{coulson2020distributionally} and allowing for real-time implementations \cite{coulson2019regularized}.  Similarly, in \cite{rotulo2019data,de2019persistency}, it is shown that data-driven synthesis of linear quadratic regulators can be achieved through the solution of semidefinite programs without identifying an explicit system model.  We note that to the best of our knowledge, no characterization of the effects of noise in the data on the performance achieved by the data-driven controllers is provided in the aforementioned papers.

\textbf{Contributions:} In this paper, we establish \emph{data-driven} versions of the System Level Synthesis (SLS) \cite{anderson2019system} parameterization of achievable closed-loop system responses for a linear-time-invariant system over a finite-horizon, that in particular, allow for the effects of noisy data on closed-loop performance to be quantified.  SLS has been central to breakthroughs in distributed optimal control \cite{wang2019system}, robust and distributed MPC \cite{alonso2019distributed,wang2019robust}, and learning-enabled control \cite{dean2019sample, dean2018regret}: our goal in this work is to take a first step towards extending its advantages to the purely data-driven, model-free setting.  In particular, we show that optimization problems over system-responses can be posed using only libraries of past system trajectories.  We first consider the idealized setting of noise free trajectories, and show an exact equivalence.  We then show that in the case of a system driven by process noise, tools from robust SLS \cite{matni2017scalable} can be used to characterize the effects of using ``noisy'' trajectories to synthesize data-driven controllers on closed-loop performance.  We further draw on tools from matrix concentration \cite{tropp2012user} to show that a trajectory averaging can be used to mitigate these effects.

\textbf{Paper structure:} In Section \ref{sec:problem}, we formally define the problem considered in this paper.  In Section \ref{sec:dd-sls}, we derive a data-driven SLS parameterization for the case of a linear-time-invariant system with no driving noise.  In Section \ref{sec:robust-dd-sls}, we extend these results to a robust data-driven SLS parameterization to accomodate driving noise.  In Section \ref{sec:subopt}, we derive sub-optimality bounds under norm-bound assumptions on the driving noise, and in Section \ref{sec:complexity}, we characterize the sample complexity needed to achieve these norm bounds through the use of a trajectory averaging technique, thus providing end-to-end sample complexity bounds for data-driven SLS. Section \ref{sec:experiments} contains  numerical examples, and we end in Section \ref{sec:conclusion} with conclusions and discussions for future work.  

\textbf{Notation:} We use $x_{[i,j]}$ as shorthand for the signal $[x^\top(i) \ x^\top(i+1) \ \cdots \ x^\top(j) ]^\top$.  Define
\[\scriptsize
\mcal{H}_L (\sigma_{[0,T-1]})
            = \begin{bmatrix}
                \sigma(0) & \sigma(1) & \cdots & \sigma(T-L) \\
                \sigma(1) & \sigma(2) & \cdots & \sigma(T -L +1) \\
                \vdots & \vdots & \ddots & \vdots \\
                \sigma(L-1) & \sigma(L) & \cdots & \sigma(T - 1)
            \end{bmatrix}, \enskip \mathbf{R} = \begin{bmatrix}
\dsuper{R}{0}{0} & \ & \ & \ \\
\dsuper{R}{1}{1} & \dsuper{R}{1}{0} & \ & \ \\
\vdots & \ddots & \ddots & \ \\
\dsuper{R}{T-1}{T-1} & \cdots & \dsuper{R}{T-1}{1} & \dsuper{R}{T-1}{0}
\end{bmatrix}
\]
For a signal $\sigma_{[0,T-1]}$, we denote the above Hankel matrix of order $L$ by $\mcal{H}_L(\sigma_{[0,T-1]})$. When the length of the signal is clear from context, we overload notation and simply write $\mcal{H}_L(\sigma)$.   A linear, causal operator $\mathbf{R}$ defined over a horizon of $T$ has matrix representation, as shown above:
here $\dsuper{R}{i}{j} \in \mathbb{R}^{p \times q}$ is a matrix of compatible dimension. We denote the set of such linear causal operators by $\mathcal{L}_{TV}^{T, p \times q}$ and drop the superscript $T, p \times q$ when it is clear: then, an operator $\mathbf{R}\in \mathcal{L}_{TV}^{T, p \times q}$ acts on a signal $\sigma_{[0,T-1]}$ through multiplication, i.e.,  $y_{[0,T-1]} = \mathbf{R}\sigma_{[0,T-1]}$. We slightly overload notation, and use \texttt{Matlab}-like syntax to extract block matrices, rows, and columns from linear operators: $\mathbf{R}(i,j)$,  $\mathbf{R}(i,:)$, and $\mathbf{R}(:,j)$ denote the $(i,j)$-th block matrix, $i$-th block row, and $j$-th block column of $\mathbf{R}$, respectively, all indexing from $0$. 

\section{Problem Statement}
\label{sec:problem}
We consider finite-time optimal state-feedback control of the discrete time linear-time-invariant (LTI) dynamical system
\begin{equation}\label{eq:lti}
x(t+1) = Ax(t) + Bu(t) + w(t), \text{ for $t=0,1,\dots,L-1,$}
\end{equation}
where $L>0$ is the control horizon, $x\in\R^n$ is the system state, $u\in\R^m$ is the control input, and $w\in\R^n$ is the disturbance.  We assume that the pair $(A,B)$ is controllable.  In order to simplify notation going forward, we adopt the convention of embedding the initial condition of system \eqref{eq:lti} in the disturbance signal as $w(-1)=x(0)$.

When the system model $(A,B)$ is known, the above problem can be efficiently solved for many cases of interest by making suitable assumptions on the noise signal $\wL$ and control objective.  This paper focuses instead on solving an optimal control problem when the model describing system \eqref{eq:lti} is unknown, but a collection of state and input trajectories (over a longer horizon $T>L$ to be specified in Section 2) $\{\xt^{(i)},\ut^{(i)}\}_{i=1}^N$ are available.  Moreover, our goal is to solve this task \emph{without explicitly estimating} the system model.

In particular, to make the discussion concrete, we focus on finite-horizon Linear Quadratic Gaussian (LQG) control, wherein the disturbances are assumed to be independently and identically distributed as $w(t) \iid \mathcal{N}(0,\sigma^2 I)$, the control policy at time $t$ is given by a linear-time-varying function of past states, i.e., $u(t) = K_t(x_{[0, t]})$, and the cost function to be minimized is given by:
\begin{equation}\label{eq:lq-cost}
    \mathbb{E}\left[\sum_{t=0}^{L-2} x^\top(t) Q x(t) + u^\top(t) R u(t) + x^\top(L-1) Q_F x(L-1)\right].
\end{equation}
We note however that much of our analysis extends to other cost functions in a natural way.
\section{Data Driven System Level Synthesis}
\label{sec:dd-sls}
We begin by considering the simplified setting in which there is no driving noise in system \eqref{eq:lti}, i.e., $w(-1)=x(0)$ and $w(t)=0$ for all $t\geq 0$.  Our approach is to connect tools from behavioral control theory, namely Willems' Fundamental Lemma \cite{willems1997introduction, markovsky2008data}, with the \emph{System Level Synthesis} (SLS) \cite{anderson2019system} parameterization of closed loop controllers.  While such a connection offers no immediate benefits in this simplified setting of noise-free centralized control, it establishes the tools needed to tackle the general problem of interest.  

\subsection{Willems' Fundamental Lemma}
Tools from behavioral system theory \cite{willems1997introduction,willems2005note,de2019persistency} provide a natural way of characterizing the behavior of a dynamical system in terms of its input/output signals.  For our purposes, we rely on recent specializations of \emph{Willems' Fundaemental Lemma} to state-space realizations of LTI systems \cite{de2019persistency}.  Central to Willems' fundamental lemma is
\emph{persistence of excitation},
which is specified in terms of a rank condition on a Hankel matrx constructed from the of the control input signal \(u\).

\begin{definition}
    \label{def:persistence}
    Let \(\sigma : \mbb{Z} \to \mbb{R}^p\) be a signal.
    We say that its finite-horizon restriction \(\sigma_{[0, T-1]}\)
    is \emph{persistently exciting} (PE)
    of order \(L\) if the Hankel matrix \(\mcal{H}_L (\sigma_{[0, T-1]})\) has full rank.
\end{definition}

The rank condition implies that \(T \geq (p + 1) L - 1\)
is a lower-bound on the horizon \(T\). In what follows, we assume that the order $L$ and data horizon $T$ are chosen such that this bound is satisfied.

\begin{lemma}[\cite{willems2005note, de2019persistency}]
    \label{lem:fund-lemma}
    Consider the system \eqref{eq:lti} with $(A,B)$ controllable, and assume that there is no driving noise.  Let $\{\xt, \ut\}$ be the state and input signals generated by the system.  Then if $\ut$ is PE of order \(n + L\), the signals
        $\xL ^\star$ and $\uL ^\star$
        are valid trajectories \(L\)-length of system \eqref{eq:lti}
        if and only if
        \begin{align*}
            \begin{bmatrix} x_{[0, L - 1]} ^\star \\ u_{[0, L - 1]} ^\star \end{bmatrix} =
            \begin{bmatrix} \mcal{H}_L (\xt) \\ \mcal{H}_L (\ut) \end{bmatrix} g,
             \text{ for some \(g \in \mbb{R}^{T - L + 1}\)}
        \end{align*}
\end{lemma}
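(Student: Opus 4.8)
The plan is to prove Willems' Fundamental Lemma (Lemma \ref{lem:fund-lemma}) in two directions, leveraging the state-space structure of \eqref{eq:lti} together with the persistence-of-excitation hypothesis. Throughout, write the collected data in terms of the explicit solution map of \eqref{eq:lti}: for any initial state $x(0)$ and input sequence $u_{[0,L-1]}$ (with $w\equiv 0$), each $x(t)$ is an affine (in fact linear) function of $x(0)$ and $u_{[0,t-1]}$ through the controllability/Toeplitz matrices built from $A$ and $B$. The ``only if'' direction is the easy one: any length-$L$ trajectory of the noise-free system is, by definition, a pair $(x_{[0,L-1]}^\star, u_{[0,L-1]}^\star)$ satisfying \eqref{eq:lti}, and since the columns of $\mcal H_L(\xt)$ over $\mcal H_L(\ut)$ are themselves such trajectories (each column is a length-$L$ window of the recorded trajectory, shifted by the LTI property), any trajectory lying in their span is again a trajectory; conversely I must show \emph{every} trajectory is in that span, which is where excitation enters.

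For the ``if'' direction, I would argue as follows. Fix a target trajectory $(x_{[0,L-1]}^\star, u_{[0,L-1]}^\star)$. I want a $g\in\mbb R^{T-L+1}$ with $\mcal H_L(\ut) g = u_{[0,L-1]}^\star$ and $\mcal H_L(\xt) g = x_{[0,L-1]}^\star$. First, note that the image of $\begin{bmatrix}\mcal H_L(\xt)\\ \mcal H_L(\ut)\end{bmatrix}$ is contained in the set of valid trajectories (by the ``only if'' reasoning), which is a subspace of dimension exactly $n + mL$ (parameterized freely by $x(0)\in\mbb R^n$ and $u_{[0,L-1]}\in\mbb R^{mL}$). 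So it suffices to show the image has dimension at least $n+mL$, i.e. that $\begin{bmatrix}\mcal H_L(\xt)\\ \mcal H_L(\ut)\end{bmatrix}$ has rank $n+mL$. Here I use that $\ut$ is PE of order $n+L$: the Hankel matrix $\mcal H_{n+L}(\ut)$ has full row rank $m(n+L)$. The standard trick (from \cite{willems2005note,de2019persistency}) is to observe that $\begin{bmatrix}\mcal H_L(\xt)\\ \mcal H_L(\ut)\end{bmatrix}$ can be related by an invertible transformation to $\begin{bmatrix}\mcal H_{n+L}(\ut)\\ \text{(something)}\end{bmatrix}$ — more precisely, write $\mcal H_L(\xt)$ in terms of the initial-state-window $x(0),\dots$ and the input Hankel via the solution map, expressing $x$-rows as $\mcal{O}\,[\text{state at window start}] + (\text{Toeplitz})\,[\text{input window}]$, where $\mcal O$ is the extended observability-type matrix; then the state at each window start is itself $\mcal C\,[\text{past inputs}]$ by controllability. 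Combining, one shows the row space of $\begin{bmatrix}\mcal H_L(\xt)\\ \mcal H_L(\ut)\end{bmatrix}$ equals that of a full-rank submatrix of $\mcal H_{n+L}(\ut)$ composed with an injective map, giving rank exactly $n+mL$, hence surjectivity onto the trajectory subspace. This yields existence of $g$.

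The main obstacle is the rank/dimension-counting step: showing that PE of order $n+L$ of the input forces $\begin{bmatrix}\mcal H_L(\xt)\\ \mcal H_L(\ut)\end{bmatrix}$ to have rank $n+mL$. The subtlety is that states are not free — they are constrained by the dynamics — so one cannot just invoke the input's full rank directly; one must carefully track how $n$ extra ``degrees'' worth of input excitation (the gap between order $n+L$ and order $L$) gets converted, via controllability of $(A,B)$, into $n$ independent directions in the state window. I would handle this by the reachability argument: controllability ensures the map from a length-$n$ input block to the resulting state is surjective onto $\mbb R^n$, and PE of order $n+L$ guarantees enough independent such input blocks appear among the Hankel columns. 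I expect the rest — the ``only if'' direction and the final conclusion that equal-dimension subspaces with one contained in the other must coincide — to be routine.
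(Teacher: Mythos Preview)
The paper does not prove Lemma~\ref{lem:fund-lemma}; it is quoted from \cite{willems2005note,de2019persistency} with no proof given, so there is nothing in the paper to compare your argument against. Your outline follows the standard route taken in those references: show that the column span of the stacked Hankel matrix is contained in the $(n+mL)$-dimensional subspace of valid length-$L$ trajectories (linearity of the dynamics), and then use PE of order $n+L$ together with controllability of $(A,B)$ to force the stacked Hankel matrix to have rank exactly $n+mL$, whence the two subspaces coincide.

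One small correction: you have the ``if'' and ``only if'' labels swapped. In the statement ``$(x^\star,u^\star)$ is a valid trajectory \emph{if and only if} it lies in the Hankel span,'' the \emph{if} direction is ``in the span $\Rightarrow$ valid trajectory'' (the easy containment you describe first), and the \emph{only if} direction is ``valid trajectory $\Rightarrow$ in the span'' (the rank argument requiring PE). Your mathematics is assigned to the right tasks; only the names are reversed.
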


Lemma~\ref{lem:fund-lemma} states that if the underlying system is controllable,
and \(\rank \mcal{H}_{n + L} (u) = n + L\),
then: (1) all initial conditions and inputs are parameterizable from observed signal data;
and (2) all valid trajectories, that is to say state/input trajectory pairs $\{\xL,\uL\}$ that are consistent with the dynamics \eqref{eq:lti}, lie in the linear span of a suitable Hankel matrix constructed from the system trajectories.  Our goal is to exploit this relationship to characterize valid \emph{closed loop system responses} of the unknown system \eqref{eq:lti} by establishing a connection to the SLS parameterization.


\subsection{System Level Synthesis}
Consider an \(L\)-length trajectory from system~\eqref{eq:lti}
expressed as block matrix operations
\begin{align}
    \label{eqn:stacked-dynamics}
    \xL
        = \mcal{Z} \mcal{A} \xL
            + \mcal{Z} \mcal{B} \uL
            + \wL
\end{align}
where \(\mcal{A} = I_L \otimes A\),
and \(\mcal{B} = I_L \otimes B\),
and where \(\mcal{Z}\) is the block-downshift operator,
i.e., a matrix with identity matrices along the first block subdiagonal and zeros elsewhere.
If it is also the case that the system~\eqref{eqn:stacked-dynamics}
satisfies the linear feedback control law
\(\uL = \mcal{K} \xL\)
for a causal linear-time-varying state-feedback control policy
\(\mcal{K}\in \mathcal{L}_{TV}^{L, m\times n}\),
then rewriting \eqref{eqn:stacked-dynamics} we arrive at
\begin{align}\label{eq:responses}
    \xL
        &= (I - \mcal{Z} (\mcal{A} + \mcal{B} \mcal{K}))^{-1} \wL
        = \PHI_x \wL
        \\
    \uL \nonumber
        &= \mcal{K} (I - \mcal{Z} (\mcal{A} + \mcal{B} \mcal{K}))^{-1} \wL
        = \PHI_u \wL
\end{align}
which captures how the process noise \(w\)
maps to the state \(x\) and control \(u\).
We refer to the causal linear operators $\PHI_x\in \mathcal{L}_{TV}^{L, n \times n}$ and $\PHI_u \in \mathcal{L}_{TV}^{L, m \times n}$ as the \emph{system responses}, which characterize the closed-loop system behavior from noise to state and control input, respectively.

\begin{theorem}[Theorem 2.1, \cite{anderson2019system}]
    \label{thm:sls-thm}
    For a system~\eqref{eq:lti}
    with state-feedback control law $\mcal{K} \in \mathcal{L}_{TV}^{L, m \times n}$, i.e., \(\uL = \mcal{K} \xL\),
    the following are true
    \begin{enumerate}
        \item The affine subspace defined by
        \begin{align}
            \label{eqn:sls-constr}
            \begin{bmatrix} (I - \mcal{Z} \mcal{A}) & -\mcal{Z} \mcal{B} \end{bmatrix}
            \begin{bmatrix} \PHI_x \\ \PHI_u \end{bmatrix}
            = I, \quad \PHI_x \in \mathcal{L}_{TV}^{L, n \times n}, \PHI_u \in \mathcal{L}_{TV}^{L, m \times n}
        \end{align}
        parameterizes all possible system responses from $\wL \to (\xL, \uL)$.
    
        \item For any causal linear operators \(\PHI_x, \PHI_u\)
        satisfying \eqref{eqn:sls-constr},
        the controller \(\mcal{K} = \PHI_u \PHI_x ^{-1} \in \mathcal{L}_{TV}^{L, m \times n}\)
        achieves the desired closed-loop responses \eqref{eq:responses}.
    \end{enumerate}
    
\end{theorem}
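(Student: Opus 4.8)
The plan is to prove the two assertions separately, observing at the outset that the ``completeness'' half of assertion~(1) --- that \emph{every} pair of causal operators satisfying \eqref{eqn:sls-constr} arises as a closed-loop response --- is an immediate consequence of assertion~(2). So it suffices to establish: (a) the closed-loop responses $(\PHI_x, \PHI_u)$ induced by an arbitrary causal $\mcal{K}$ satisfy \eqref{eqn:sls-constr}; and (b) given causal $\PHI_x, \PHI_u$ satisfying \eqref{eqn:sls-constr}, the operator $\mcal{K} = \PHI_u \PHI_x^{-1}$ is well-defined, causal, and reproduces \eqref{eq:responses}. The recurring structural fact I will lean on is that $\mcal{Z}$ is the strictly lower block-triangular shift, so $\mcal{Z}\mcal{A}$ and $\mcal{Z}(\mcal{A}+\mcal{B}\mcal{K})$ are nilpotent; hence $I - \mcal{Z}\mcal{A}$ and $I - \mcal{Z}(\mcal{A}+\mcal{B}\mcal{K})$ are invertible for every $\mcal{K}$, with causal inverses given by the terminating Neumann series.

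For (a), I would substitute $\PHI_x = (I - \mcal{Z}(\mcal{A}+\mcal{B}\mcal{K}))^{-1}$ and $\PHI_u = \mcal{K}\PHI_x$ into the left-hand side of \eqref{eqn:sls-constr} and compute directly:
\[
(I - \mcal{Z}\mcal{A})\PHI_x - \mcal{Z}\mcal{B}\PHI_u = \bigl(I - \mcal{Z}\mcal{A} - \mcal{Z}\mcal{B}\mcal{K}\bigr)\PHI_x = \bigl(I - \mcal{Z}(\mcal{A}+\mcal{B}\mcal{K})\bigr)\PHI_x = I .
\]
Causality of $\PHI_x$ and $\PHI_u$ then follows because $\mcal{Z}, \mcal{A}, \mcal{B}, \mcal{K}$ are all causal and the class of causal operators is closed under products and under inversion when the diagonal blocks are invertible.

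For (b), I would first argue that $\PHI_x$ is invertible: since $I - \mcal{Z}\mcal{A}$ has identity blocks on its block diagonal while $\mcal{Z}\mcal{B}$ is strictly block lower-triangular, matching block-diagonal entries on both sides of \eqref{eqn:sls-constr} forces $\PHI_x(i,i) = I$ for every $i$; a causal operator with invertible diagonal blocks is invertible with causal inverse, so $\mcal{K} := \PHI_u \PHI_x^{-1} \in \mcal{L}_{TV}^{L, m\times n}$. Now impose the feedback law $\uL = \mcal{K}\xL$ on the stacked dynamics \eqref{eqn:stacked-dynamics}; rearranging gives $(I - \mcal{Z}\mcal{A})\xL - \mcal{Z}\mcal{B}\uL = \wL$, and substituting $\uL = \PHI_u\PHI_x^{-1}\xL$ yields $\bigl[(I - \mcal{Z}\mcal{A}) - \mcal{Z}\mcal{B}\PHI_u\PHI_x^{-1}\bigr]\xL = \wL$. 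Factoring $\PHI_x^{-1}$ out on the right and invoking \eqref{eqn:sls-constr} collapses the bracket to $\PHI_x^{-1}$, so $\xL = \PHI_x\wL$ and consequently $\uL = \PHI_u\PHI_x^{-1}\xL = \PHI_u\wL$, which is exactly \eqref{eq:responses}. Using this same $\mcal{K}$ also settles the completeness half of (1).

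I expect the main obstacle to be not any single computation but the careful bookkeeping of causality and invertibility --- ensuring every inverse written down actually exists and remains causal. This is precisely where the finite-horizon block-triangular structure does the heavy lifting: nilpotency of $\mcal{Z}\mcal{A}$ and $\mcal{Z}(\mcal{A}+\mcal{B}\mcal{K})$ guarantees the two Neumann-series inverses appearing in \eqref{eq:responses}, and the observation that \eqref{eqn:sls-constr} pins the diagonal blocks of $\PHI_x$ to the identity guarantees that $\PHI_x^{-1}$ exists and is causal, so $\mcal{K} = \PHI_u\PHI_x^{-1}$ is a legitimate causal LTV state-feedback law.
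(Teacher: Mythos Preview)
Your proof is correct and follows the standard argument. Note, however, that the paper does not actually prove this theorem: it is quoted verbatim as Theorem~2.1 of \cite{anderson2019system} and used as a black box, so there is no ``paper's own proof'' to compare against. What you have written is essentially the proof one finds in the cited reference --- the direct verification of \eqref{eqn:sls-constr} for part~(a), and for part~(b) the observation that \eqref{eqn:sls-constr} forces $\PHI_x(i,i)=I$ (hence $\PHI_x$ is invertible with causal inverse), followed by back-substitution into the stacked dynamics.
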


Theorem \ref{thm:sls-thm} allows for the problem of controller synthesis to be equivalently posed as a search over the affine space of system responses characterized by constraint \eqref{eqn:sls-constr} by setting $\xL = \PHI_x\wL$ and $\uL = \PHI_u\wL$.  
In particular, the LQG problem posed in Section \ref{sec:problem} can be recast as a search over system responses (see Section 2.2 of \cite{anderson2019system}) as:\footnote{We drop both the squaring of the objective function, and the scaling factor $\sigma^2$, for brevity of notation going forward, as neither affect the optimal solution, or the order-wise scaling of the derived bounds.}
\begin{align}
    \label{eqn:sls-lqg}
    \minimize_{\PHI_x, \PHI_u}
        \enskip
        \norm{\begin{bmatrix} \mcal{Q}^{1/2} \\ & \mcal{R}^{1/2} \end{bmatrix}
                    \begin{bmatrix} \PHI_x \\ \PHI_u \end{bmatrix}}_F
    \qquad \text{subject to \eqref{eqn:sls-constr},}
\end{align}
where \(\mcal{Q} = I_L \otimes Q\), \(\mcal{R} = I_L \otimes R\), and $\|\cdot\|_F$ is the Frobenius norm.  In the noise free setting, i.e., when the initial condition $w(-1) = x(0)$ is known, and $w(t)=0$ for $t\geq 0$, the objective function of this problem instead simplifies to 
\begin{align}
    \label{eqn:sls-lqr}
    \minimize_{\PHI_x(:,0), \PHI_u(:,0)}
        \enskip
        \norm{\begin{bmatrix} \mcal{Q}^{1/2} \\ & \mcal{R}^{1/2} \end{bmatrix}
                    \begin{bmatrix} \PHI_x(:,0) \\ \PHI_u(:,0) \end{bmatrix}x(0)}_F,
\end{align}
and similarly, because only the initial condition is nonzero, the affine constraint~\eqref{eqn:sls-constr} reduces to
\begin{align}
    \label{eqn:noiseless-sls-constr}
    \begin{bmatrix} (I - \mcal{Z} \mcal{A}) & -\mcal{Z} \mcal{B} \end{bmatrix}
    \begin{bmatrix} \PHI_x (:, 0) \\ \PHI_u (:, 0) \end{bmatrix}
    = I(:,0).
\end{align}

\subsection{A Data-Driven Formulation}

We now show how the simplified achievability constraints \eqref{eqn:noiseless-sls-constr} can be replaced by a data-driven representation through the use of Lemma \ref{lem:fund-lemma}.  Our key insight is to recognize that the $i$-th column of $\PHI_x$ and $\PHI_u$ are the impulse response of the state and control input, respectively, to the $i$-th disturbance channel, which are themselves, valid system trajectories that can be characterized using Willems' fundamental lemma.

\begin{theorem}
    \label{thm:noiseless-data-sls}
    Consider the system \eqref{eq:lti} with $(A,B)$ controllable, and assume that there is no driving noise.  Suppose that a state/input signal pair $\{\xt,\ut\}$ is collected, and assume that
    $\ut$ is PE of order at least $n + L$. We then have that the set of feasible solutions to constraint \eqref{eqn:noiseless-sls-constr} defined over a time horizon $t=0,1,\dots,L-1$ can be equivalently characterized as:
    \begin{equation}\label{eq:dd-sls}
        \begin{bmatrix}
                \mcal{H}_L (x) \\ \mcal{H}_L (u)
            \end{bmatrix}
            G, \enskip \text{ for all $G \in \Gamma (x):= \{G : \mcal{H}_1 (x) G = I\}$.}
    \end{equation}
\end{theorem}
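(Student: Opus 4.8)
The plan is to show that the two parameterizations—feasible solutions to the SLS achievability constraint \eqref{eqn:noiseless-sls-constr}, and the Hankel-matrix representation \eqref{eq:dd-sls}—describe the same set of pairs $(\PHI_x(:,0), \PHI_u(:,0))$. I would prove this by a double inclusion, using the structural observation highlighted just before the theorem: the first block-column of $\PHI_x$ and $\PHI_u$ is precisely the state/input trajectory of the closed-loop system when the initial condition is $x(0) = I(:,0)$-th column vector $e_j \in \R^n$, i.e., the impulse response of the autonomous closed-loop system to the $j$-th coordinate direction of the initial condition. Since the closed loop is just \eqref{eq:lti} run with $u = \mcal K x$ and $w(t)=0$ for $t\ge 0$, each such column is a genuine $L$-length open-loop trajectory of \eqref{eq:lti} (with the corresponding control sequence), and Lemma~\ref{lem:fund-lemma} applies column-by-column.

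\textbf{Step 1 (constraint $\Rightarrow$ Hankel form).} Take any $(\PHI_x(:,0), \PHI_u(:,0))$ satisfying \eqref{eqn:noiseless-sls-constr}. For each coordinate $j \in \{1,\dots,n\}$, the $j$-th column of this pair, call it $(\xi_j, \upsilon_j)$, satisfies $(I - \mcal Z \mcal A)\xi_j - \mcal Z\mcal B\,\upsilon_j = e_j$, which unrolls exactly to the recursion $\xi_j(0) = e_j$, $\xi_j(t+1) = A\xi_j(t) + B\upsilon_j(t)$—that is, $(\xi_j, \upsilon_j)$ is a valid $L$-length trajectory of \eqref{eq:lti}. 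Since $\ut$ is PE of order $n+L$, Lemma~\ref{lem:fund-lemma} gives a $g_j \in \R^{T-L+1}$ with $[\xi_j^\top\ \upsilon_j^\top]^\top = [\mcal H_L(x)^\top\ \mcal H_L(u)^\top]^\top g_j$. Collecting $G = [g_1\ \cdots\ g_n]$, we get $[\PHI_x(:,0)^\top\ \PHI_u(:,0)^\top]^\top = [\mcal H_L(x)^\top\ \mcal H_L(u)^\top]^\top G$. It remains to check $G \in \Gamma(x)$: reading off the top block row, $\mcal H_1(x) G$ is the matrix whose $j$-th column is $x(0)$-shifted... more precisely $\mcal H_1(x) g_j = \xi_j(0) = e_j$, so $\mcal H_1(x)G = I$.

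\textbf{Step 2 (Hankel form $\Rightarrow$ constraint).} Conversely, given any $G \in \Gamma(x)$, define $[\PHI_x(:,0)^\top\ \PHI_u(:,0)^\top]^\top := [\mcal H_L(x)^\top\ \mcal H_L(u)^\top]^\top G$. Each column $(\xi_j, \upsilon_j)$ of the result lies in the column span of $[\mcal H_L(x)^\top\ \mcal H_L(u)^\top]^\top$, so by the ``if'' direction of Lemma~\ref{lem:fund-lemma} it is a valid $L$-length trajectory of \eqref{eq:lti}, hence satisfies $\xi_j(t+1) = A\xi_j(t) + B\upsilon_j(t)$; and the constraint $\mcal H_1(x)G = I$ forces $\xi_j(0) = e_j$. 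Unrolling these into block form gives exactly \eqref{eqn:noiseless-sls-constr}. One should also confirm the causal/lower-triangular structure required for $\PHI_x, \PHI_u \in \mathcal L_{TV}$ is automatically respected—but since we are only parameterizing the first block column, there is no triangularity constraint beyond membership in the ambient space, so this is immediate.

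\textbf{The main obstacle} I anticipate is bookkeeping rather than conceptual: carefully matching the stacked/unrolled form of \eqref{eqn:noiseless-sls-constr} with the time-domain recursion of \eqref{eq:lti}, and being precise about the fact that Lemma~\ref{lem:fund-lemma} as stated is about whole trajectories $(\xL^\star, \uL^\star)$ whereas here we must apply it separately to each of the $n$ columns and then argue the column-wise multipliers $g_j$ assemble into a single matrix $G$ with the right normalization $\mcal H_1(x)G = I$. A secondary subtlety is that Lemma~\ref{lem:fund-lemma} requires PE of order $n+L$ while Definition~\ref{def:persistence} and the surrounding discussion have already reconciled the horizon bound $T \ge (p+1)L - 1$; I would simply note the hypothesis ``$\ut$ is PE of order at least $n+L$'' is exactly what Lemma~\ref{lem:fund-lemma} needs, so no extra work is required there.
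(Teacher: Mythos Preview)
Your proposal is correct and follows essentially the same double-inclusion argument as the paper: in both directions you work column-by-column, using Lemma~\ref{lem:fund-lemma} to match valid $L$-length trajectories with the column span of the stacked Hankel matrix, and then read off $\mcal{H}_1(x)G = I$ from the top block row. The only cosmetic difference is that in the reverse inclusion the paper verifies \eqref{eqn:noiseless-sls-constr} via the single matrix identity $\bigl[(I-\mcal Z\mcal A)\ -\mcal Z\mcal B\bigr]\bigl[\mcal H_L(x);\mcal H_L(u)\bigr] = \bigl[\mcal H_1(x);0\bigr]$ (a direct consequence of the noiseless stacked dynamics), rather than invoking the ``if'' direction of Lemma~\ref{lem:fund-lemma} per column as you do---but the two computations are equivalent.
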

\begin{proof}
Our goal is to prove the following relationship
\begin{align*}
        \bigg\{\begin{bmatrix} \PHI_x (:, 0) \\ \PHI_u (:, 0) \end{bmatrix}
            \text{ satisfying \eqref{eqn:noiseless-sls-constr}}
        \bigg\}
        =
        \bigg\{
            \begin{bmatrix}
                \mcal{H}_L (x) \\ \mcal{H}_L (u)
            \end{bmatrix}
            G
            : G \in \Gamma (x)
        \bigg\}.
    \end{align*}
To alleviate notational burden, denote the left and right-hand sets as LHS and RHS respectively.

(\(\subseteq\))
Consider some \(\PHI_x(:,0), \PHI_u(:,0) \in \mrm{LHS}\).  By Theorem \ref{thm:sls-thm}, we then have that for any initial condition $x(0)$, that $\{\xL,\uL\} = \{\PHI_x(:,0)x(0),\PHI_u(:,0)x(0)\}$ is a valid PE system trajectory.  By Lemma~\ref{lem:fund-lemma}, we then have that for any $x(0)$, there exists a $g \in \mbb{R}^{T - L + 1}$ such that
\begin{equation}
\begin{bmatrix} \PHI_x (:, 0) \\ \PHI_u (:, 0) \end{bmatrix}x(0) = \begin{bmatrix} \mcal{H}_L (x) \\ \mcal{H}_L (u) \end{bmatrix} g.
\label{eq:dd-sls-helper}
\end{equation}

Let $x^{(i)}(0)=e_i$, $i=1,\dots,n$ be the standard basis element, and let $g_i$ be the corresponding vector such that \eqref{eq:dd-sls-helper} holds.  Concatenating the resulting expressions column-wise, we obtain the following expression:
\begin{align}
\begin{bmatrix} \PHI_x (:, 0) \\ \PHI_u (:, 0)\end{bmatrix}\begin{bmatrix}
            | & | & \cdots & | \\
            e_1 & e_2 & \cdots & e_n \\
            | & | & \cdots & |
            \end{bmatrix}
=
\begin{bmatrix} \PHI_x (:, 0) \\ \PHI_u (:, 0)\end{bmatrix}
=
\begin{bmatrix}
            \mcal{H}_L (x) \\ \mcal{H}_L (u)
            \end{bmatrix}
            \begin{bmatrix}
            | & | & \cdots & | \\
            g_1 & g_2 & \cdots & g_n \\
            | & | & \cdots & |
            \end{bmatrix}
        =: \begin{bmatrix}
            \mcal{H}_L (x) \\ \mcal{H}_L (u)
            \end{bmatrix}
            G
\label{eq:dd-sls-helper2}
\end{align}
Furthermore, from equation \eqref{eq:dd-sls-helper2}, we have that \(\mcal{H}_1 (x)G=\Phi_x(0,0)=I\), as $\mcal{H}_1$
is the first block row of \(\mcal{H}_L (x)\).  It therefore follows that \(G \in \Gamma(x)\), proving that \(\mrm{LHS} \subseteq \mrm{RHS}\).

(\(\supseteq\))
Consider a \(\{\mcal{H}_L (x)G, \mcal{H}_L (u)G\} \in \mrm{RHS}\).
Substituting these into constraint \eqref{eqn:noiseless-sls-constr}, we obtain
\begin{align*}
\begin{bmatrix} (I - \mcal{Z} \mcal{A}) & - \mcal{Z} \mcal{B} \end{bmatrix}
        \begin{bmatrix} \mcal{H}_L (x) \\ \mcal{H}_L (u) \end{bmatrix} G
    = \begin{bmatrix} \mcal{H}_1 (x) \\ 0 \end{bmatrix} G
\end{align*}
where the equality is a direct consequence of the stacked system dynamics \eqref{eqn:stacked-dynamics} with no driving noise.  As \(G \in \Gamma(x)\) by assumption, we have that \(\mcal{H}_1(x) G = I\), and thus $\{\mcal{H}_L (x)G, \mcal{H}_L (u)G\}$ define valid system responses, from which the desired result follows.
\end{proof}

Thus, if a state/input pair $\{\xt,\ut\}$ is generated by a PE input signal of order at least $n + L$, Theorem~\ref{thm:noiseless-data-sls}
gives conditions under which $\{\mcal{H}_L (x), \mcal{H}_L (u)\}$ can be used to parameterize achievable system responses for system \eqref{eq:lti} under no driving noise.  In particular, one can then reformulate the deterministic optimal control problem formulated in equations \eqref{eqn:sls-lqr} and \eqref{eqn:noiseless-sls-constr} as
\begin{align}
    \label{eqn:dd-sls-lqr}
    \minimize_{G\in\Gamma(x)}
        \enskip
        \norm{\begin{bmatrix} \mcal{Q}^{1/2} \\ & \mcal{R}^{1/2} \end{bmatrix}
                    \begin{bmatrix} \mcal{H}_L (x) \\ \mcal{H}_L (u) \end{bmatrix} G x(0)}_F.
\end{align}

\section{Robust-Data Driven System Level Synthesis}
\label{sec:robust-dd-sls}

We now turn our attention to the original stochastic LQG optimal control problem \eqref{eqn:sls-lqg}, where for notational convenience we set $w(-1)=x(0)=0$, and driving noise $w(t)\iid \mathcal{N}(0,\sigma^2 I)$ for $t= 0,\dots,T-2$.  To differentiate between the state of the noise free and noisy system, we will denote the state signal by $\tilde x_{[0,T-1]}$ when driving noise is present. This additional \emph{unmeasurable input} means that valid system trajectories can no longer be solely characterized in terms of the Hankel matrices $\mcal{H}_L(\tilde x)$ and $\mcal{H}_L(u)$, as the effect of the process noise, as captured by a corresponding Hankel matrix $\mcal{H}_L(w)$, must also be accounted for.  To address this challenge, we relate the state-trajectories of system \eqref{eq:lti} under driving noise to those of system \eqref{eq:lti} under no driving noise, and use this relationship to construct \emph{approximate system responses} that lie a bounded distance from the affine subspace defined in \eqref{eqn:sls-constr}.  We then leverage a robust SLS parameterization to bound the effects of this approximation error on the closed loop behavior.

\subsection{Robust System Level Synthesis}
We begin with a robust variant of Theorem \ref{thm:sls-thm} that characterizes the behavior achieved by a controller constructed from system responses lying near the affine subspace characterized by constraint \eqref{eqn:sls-constr}.

\begin{theorem}[Theorem 2.2, \cite{anderson2019system}]
    \label{thm:robust-sls}
    Let \(\DDelta\) be a strictly causal linear operator (i.e., its matrix representation is strictly block-lower-triangular),
    and suppose that \(\{\hat{\PHI}_x, \hat{\PHI}_u\}\)  satisfy
    \begin{align}
        \label{eqn:approx-sls-constr}
        \begin{bmatrix}
            (I - \mcal{Z} \mcal{A}) & -\mcal{Z} \mcal{B}
        \end{bmatrix}
        \begin{bmatrix} \hat{\PHI}_x \\ \hat{\PHI}_u \end{bmatrix}
        = I + \DDelta, \quad \hat\PHI_x \in \mathcal{L}_{TV}^{L, n \times n}, \hat\PHI_u \in \mathcal{L}_{TV}^{L, m \times n} 
    \end{align}
    Then the controller \(\hat{\mcal{K}} = \hat{\PHI}_u \hat{\PHI}_x ^{-1}\)
    achieves the system responses 
    \begin{equation}\label{eq:approx-responses}
    \begin{bmatrix} \xL \\ \uL
    \end{bmatrix} = \begin{bmatrix} \hat{\PHI}_x \\ \hat{\PHI}_u
    \end{bmatrix}(I + \DDelta)^{-1}\wL
    \end{equation}
\end{theorem}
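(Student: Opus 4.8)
The plan is to mimic the proof of the original SLS theorem (Theorem~\ref{thm:sls-thm}), since the robust version is a perturbation of it. The key observation is that $(I + \DDelta)$ is invertible: because $\DDelta$ is strictly causal, its matrix representation is strictly block-lower-triangular, hence nilpotent, so $I + \DDelta$ is block-lower-triangular with identity blocks on the diagonal and its inverse exists (and is itself causal). This lets me define $\PHI_x := \hat\PHI_x (I+\DDelta)^{-1}$ and $\PHI_u := \hat\PHI_u (I+\DDelta)^{-1}$ and observe that these satisfy the \emph{exact} achievability constraint \eqref{eqn:sls-constr}: right-multiplying \eqref{eqn:approx-sls-constr} by $(I+\DDelta)^{-1}$ gives
\begin{align*}
    \begin{bmatrix} (I - \mcal{Z}\mcal{A}) & -\mcal{Z}\mcal{B} \end{bmatrix}
    \begin{bmatrix} \PHI_x \\ \PHI_u \end{bmatrix}
    = (I + \DDelta)(I + \DDelta)^{-1} = I.
\end{align*}

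Next I would check that $\{\PHI_x, \PHI_u\}$ are genuinely causal linear operators in $\mathcal{L}_{TV}^{L,n\times n}$ and $\mathcal{L}_{TV}^{L,m\times n}$ respectively, so that Theorem~\ref{thm:sls-thm} applies to them directly: a product of causal operators is causal, and $(I+\DDelta)^{-1}$ is causal as noted above. One should also confirm $\PHI_x$ is invertible — this follows because $\hat\PHI_x$ must be invertible (its top-left block, like that of any solution to \eqref{eqn:approx-sls-constr}, is forced to be $I$ by the first block-row of the constraint, and it is block-lower-triangular), hence $\PHI_x = \hat\PHI_x (I+\DDelta)^{-1}$ is a product of invertibles. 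Then by Theorem~\ref{thm:sls-thm}, part 2, the controller $\mcal{K} := \PHI_u \PHI_x^{-1}$ achieves the closed-loop responses $\xL = \PHI_x \wL$, $\uL = \PHI_u \wL$.

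The final step is to rewrite everything back in terms of the hatted quantities. We have $\mcal{K} = \PHI_u \PHI_x^{-1} = \hat\PHI_u (I+\DDelta)^{-1}\big(\hat\PHI_x(I+\DDelta)^{-1}\big)^{-1} = \hat\PHI_u (I+\DDelta)^{-1}(I+\DDelta)\hat\PHI_x^{-1} = \hat\PHI_u \hat\PHI_x^{-1} = \hat{\mcal{K}}$, so the controller defined in the statement is exactly the one to which Theorem~\ref{thm:sls-thm} applies. Substituting $\PHI_x = \hat\PHI_x(I+\DDelta)^{-1}$ and $\PHI_u = \hat\PHI_u(I+\DDelta)^{-1}$ into $\xL = \PHI_x\wL$, $\uL = \PHI_u\wL$ yields exactly \eqref{eq:approx-responses}, completing the argument.

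I do not anticipate a genuine obstacle here — the proof is essentially a change of variables that absorbs the perturbation $(I+\DDelta)$ into the response operators. The one point requiring a little care is the invertibility/causality bookkeeping for $(I+\DDelta)^{-1}$ and $\hat\PHI_x$; I would state the nilpotency-of-strictly-lower-triangular fact explicitly, and note that block-lower-triangularity with identity diagonal blocks (forced by the first block-row of the achievability constraint) guarantees the needed inverses exist and remain causal, so that Theorem~\ref{thm:sls-thm} can be invoked as a black box.
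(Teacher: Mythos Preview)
Your argument is correct and is the standard proof of this robust SLS result. Note, however, that the paper does not actually supply its own proof of this theorem: it is quoted verbatim as Theorem~2.2 of \cite{anderson2019system} and used as a black box, so there is no in-paper proof to compare against. Your reduction to Theorem~\ref{thm:sls-thm} via the change of variables $\PHI_x = \hat\PHI_x(I+\DDelta)^{-1}$, $\PHI_u = \hat\PHI_u(I+\DDelta)^{-1}$ is exactly the argument given in the cited reference. One small sharpening: you can state directly that \emph{all} block-diagonal entries of $\hat\PHI_x$ equal $I$ (not just the top-left one), since $\mcal{Z}\mcal{A}\hat\PHI_x$ and $\mcal{Z}\mcal{B}\hat\PHI_u$ are both strictly block-lower-triangular and $\DDelta$ is as well; this immediately gives the invertibility you need.
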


Equation \eqref{eq:approx-responses} shows that the effect of the error term $\DDelta$ in the approximate achievability constraint \eqref{eqn:approx-sls-constr} is to map the original disturbance signal to  $\wL\to\tilde w_{[-1,L-2]}:= (I + \DDelta)^{-1}\wL$.  This makes clear that we must design the full system responses $\{\PHI_x,\PHI_u\}$, and not just their first block-columns as in the idealized setting considered in the previous section.  In particular, the new disturbance $\tilde w_{[-1,L-2]}$ will have full support even if $w_{[-1,L-2]}$ is only nonzero for $w(-1)=x(0)$.

\subsection{A Robust Data-Driven Formulation}
Thus our challenge is to construct causal linear operators
\(\{\hat{\PHI}_x, \hat{\PHI}_u\}\) from noisy data $\{\xtt,\ut\}$ that satisfy equation \eqref{eqn:approx-sls-constr} with as small a perturbation term $\DDelta$ as possible.
Our approach is to construct each block-column of the approximate system responses individually, and then suitably concatenate them to construct a feasible solution to constraint \eqref{eqn:approx-sls-constr}, allowing us to explicitly characterize the effect of the driving noise $w_{[-1,T-2]}$ on the the perturbation term $\DDelta$. We emphasize that we have access to only \(\{\xtt, \ut\}\), but it is instructive to also consider \(w_{[-1,T-2]}\) in our analysis.
To begin, as each column of
\(\mcal{H}_L (\tilde x), \mcal{H}_L (u), \mcal{H}_L (w)\)
satisfies~\eqref{eqn:stacked-dynamics},
it follows that
\begin{align}
    \label{eqn:data-only-constr}
    \begin{bmatrix} (I - \mcal{Z} \mcal{A}) & -\mcal{Z} \mcal{B} \end{bmatrix}
    \begin{bmatrix} \mcal{H}_L (\tilde x) \\ \mcal{H}_L (u) \end{bmatrix}
    =
    \begin{bmatrix} \mcal{H}_1 (\tilde x) \\ 0 \end{bmatrix}
    + \mcal{Z} \mcal{H}_L (w)
\end{align}
Then, fix a \(\hat{G} \in \Gamma (\tilde x)\) and 
let \(\hat{\PHI}_x (:, 0) = \mcal{H}_L (\tilde x) \hat{G}\)
and \(\hat{\PHI}_u (:, 0) = \mcal{H}_L (u) \hat{G}\)
as in the proof of Theorem~\ref{thm:noiseless-data-sls},
\begin{align}
    \label{eqn:one-col-constr}
    \begin{bmatrix} (I - \mcal{Z} \mcal{A}) & -\mcal{Z} \mcal{B} \end{bmatrix}
    \begin{bmatrix} \hat{\PHI}_x (:, 0) \\ \hat{\PHI}_u (:, 0) \end{bmatrix}
    =
    I(:,0)
    + \underbrace{\mcal{Z} \mcal{H}_L (w) \hat{G}}_{\DDelta(:,0)}
\end{align}
If block down-shifting is accounted for, \eqref{eqn:one-col-constr}
demonstrates the construction of a single block-column of
\(\{\hat{\PHI}_x, \hat{\PHI}_u\}\) and $\DDelta$.  The construction of the other columns is similar;
in general consider
\(\hat{G}_0, \ldots, \hat{G}_{L - 1} \in \Gamma(\tilde x)\),
where each \(\hat{G}_{k - 1}\) is used to construct the \(k\)th column of
\(\hat{\PHI}_x, \hat{\PHI}_u\).
Note that \(\mcal{Z}\) commutes with block-diagonal matrices with
identical block-diagonal entries (adjusting for dimensions): this can be seen by observing that left-multiplication by $\mcal Z$ (down-shifting) is equivalent
to right-multiplication by $\mcal Z$ (left-shifting).  Thus, we can construct down-shifted block-columns of the form \eqref{eqn:one-col-constr} as follows
\begin{align}
    \label{eqn:k-col-constr}
    \begin{bmatrix} (I - \mcal{Z} \mcal{A}) & -\mcal{Z} \mcal{B} \end{bmatrix}
    \begin{bmatrix}
        \mcal{Z}^{k - 1} \mcal{H}_L (x) \\
        \mcal{Z}^{k - 1} \mcal{H}_L (u)
    \end{bmatrix}
    \hat{G}_{k - 1}
    = \mcal{Z}^{k - 1} I(:,0)
        + \mcal{Z}^k \mcal{H}_L (w) \hat{G}_{k - 1},
\end{align}
from which full approximate system responses can be constructed, as formalized in the following.

\begin{theorem}
    \label{thm:noisy-data-sls}
    For system~\eqref{eq:lti} with $(A,B)$ controllable, and control input $\ut$ and disturbance process $w_{[-1,T-2]}]$ PE of order \(n + L\),
    the approximate system response matrices $\{\hat\PHI_x, \hat\PHI_u\}$ and perturbation term \(\DDelta\) are defined as
    \begin{align}
        \hat{\PHI}_x
            &= \OZ
                (I_L \otimes \mcal{H}_L (\tilde x)) \hat{\mcal{G}}
                \label{eqn:hats}, \enskip
        \hat{\PHI}_u
            = \OZ
                (I_L \otimes \mcal{H}_L (u)) \hat{\mcal{G}}
                , \enskip
        \DDelta
            = \OZ
                (I_L \otimes \mcal{Z} \mcal{H}_L (w))
                \hat{\mcal{G}}
    \end{align}
    and satisfy the approximate achievability constraint \eqref{eqn:approx-sls-constr}, where
    \(
       \OZ := \begin{bmatrix} I & \mcal{Z} & \cdots & \mcal{Z}^{L - 1} \end{bmatrix}\)
    and
    \(\hat{\mcal{G}}\in\mcal{L}_{TV}\) with block-diagonal elements {$\hat{G}(i, i)\in\Gamma(\tilde x)$} for $i=0,1,\dots, L-1$, and off-diagonal blocks $\mcal{H}_1(\tilde x)\hat{G}(i,j)=0$ for $i \neq j$.
\end{theorem}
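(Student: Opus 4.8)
The plan is to assemble the full approximate system responses column-block by column-block, using the single-column construction in~\eqref{eqn:k-col-constr} as the building block, and then to verify that the stacked objects $\{\hat\PHI_x,\hat\PHI_u\}$ and $\DDelta$ so constructed (i) are causal linear operators of the stated dimensions, (ii) have $\DDelta$ strictly causal, and (iii) satisfy~\eqref{eqn:approx-sls-constr}. First I would fix $\hat G(0,0),\dots,\hat G(L-1,L-1)\in\Gamma(\tilde x)$ and set $\hat\PHI_x(:,k)=\mcal Z^{k}\mcal H_L(\tilde x)\hat G(k,k)$, and likewise for $\hat\PHI_u$ and $\DDelta(:,k)=\mcal Z^{k+1}\mcal H_L(w)\hat G(k,k)$, i.e.\ exactly the $(k{+}1)$st block column produced by~\eqref{eqn:k-col-constr} (re-indexed). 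The claim is that the block-column concatenation of these is precisely the compact expression $\OZ\,(I_L\otimes\mcal H_L(\tilde x))\,\hat{\mcal G}$, where $\hat{\mcal G}$ is block-diagonal with $\hat G(k,k)$ on the diagonal and zeros off-diagonal — this is just the statement that stacking $\mcal Z^{k}\mcal H_L(\tilde x)\hat G(k,k)$ side by side equals $\begin{bmatrix} I & \mcal Z & \cdots & \mcal Z^{L-1}\end{bmatrix}$ applied to the block-diagonal stacking of the $\mcal H_L(\tilde x)\hat G(k,k)$'s. The statement of the theorem is slightly more permissive (it allows $\hat{\mcal G}$ to carry off-diagonal blocks $\hat G(i,j)$ as long as $\mcal H_1(\tilde x)\hat G(i,j)=0$), so I would carry the general $\hat{\mcal G}$ through and note that the off-diagonal blocks contribute nothing to the achievability residual because $\mcal H_1(\tilde x)\hat G(i,j)=0$ kills them in the relevant place.

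Next I would plug the stacked responses into the left-hand side of~\eqref{eqn:approx-sls-constr} and compute. Using $\begin{bmatrix}(I-\mcal Z\mcal A) & -\mcal Z\mcal B\end{bmatrix}\begin{bmatrix}\mcal H_L(\tilde x)\\ \mcal H_L(u)\end{bmatrix} = \begin{bmatrix}\mcal H_1(\tilde x)\\ 0\end{bmatrix} + \mcal Z\mcal H_L(w)$ from~\eqref{eqn:data-only-constr}, linearity, and the fact that $\mcal Z$ commutes with block-diagonal matrices having identical diagonal blocks (the left-shift/down-shift observation already invoked in the text), the $k$th column block of the product becomes $\mcal Z^{k}\big(\begin{bmatrix}\mcal H_1(\tilde x)\\ 0\end{bmatrix}+\mcal Z\mcal H_L(w)\big)\hat G(k,k)$, plus terms in the off-diagonal $\hat G(i,k)$ that are premultiplied by $\mcal H_1(\tilde x)$ and hence vanish. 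Since $\hat G(k,k)\in\Gamma(\tilde x)$ means $\mcal H_1(\tilde x)\hat G(k,k)=I$, the first piece is $\mcal Z^{k}I(:,0)$, which is exactly the $k$th block column of the identity; summing over $k$ recovers $I$. The second piece is $\mcal Z^{k+1}\mcal H_L(w)\hat G(k,k)$, whose concatenation over $k$ is precisely $\OZ(I_L\otimes\mcal Z\mcal H_L(w))\hat{\mcal G}=\DDelta$. Hence the left-hand side equals $I+\DDelta$, which is~\eqref{eqn:approx-sls-constr}.

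Finally I would check the structural claims: $\hat\PHI_x,\hat\PHI_u$ are causal because each block column $\mcal Z^{k}\mcal H_L(\tilde x)\hat G(k,k)$ has its nonzero entries in block rows $\geq k$ (down-shifting by $k$ pushes support below the diagonal appropriately), so the assembled matrices are block-lower-triangular; and $\DDelta$ is \emph{strictly} causal because each of its columns carries the extra factor $\mcal Z$ (appearing as $\mcal Z^{k+1}$), forcing all nonzero blocks strictly below the diagonal — this is exactly the hypothesis needed to invoke Theorem~\ref{thm:robust-sls}. I would also remark that membership in $\mathcal{L}_{TV}^{L,n\times n}$ resp.\ $\mathcal{L}_{TV}^{L,m\times n}$ is just dimension bookkeeping on the Hankel blocks. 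The main obstacle, and the step I would spend the most care on, is the commutation/indexing argument: making precise that left-multiplying a block-column construction by $\mcal Z^{k}$ is consistent with the ``right-shift'' reading needed so that the residual in~\eqref{eqn:k-col-constr} lines up columnwise with $\mcal Z^{k}I(:,0)$, and that the off-diagonal entries of $\hat{\mcal G}$ genuinely drop out rather than polluting either $I$ or $\DDelta$. Everything else is linearity and substitution of~\eqref{eqn:data-only-constr}.
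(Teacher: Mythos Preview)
Your proposal is correct and follows essentially the same route as the paper: both arguments pull $(I-\mcal Z\mcal A)$ and $-\mcal Z\mcal B$ through $\OZ$ via the commutation of $\mcal Z$ with block-diagonal operators, invoke~\eqref{eqn:data-only-constr} to produce $\OZ\big(I_L\otimes([\mcal H_1(\tilde x);0]+\mcal Z\mcal H_L(w))\big)$, and then right-multiply by $\hat{\mcal G}$ so that the $\Gamma(\tilde x)$ and null-space constraints collapse the first term to $I$ while the second is, by definition, $\DDelta$. One small slip to tighten: when you allow off-diagonal blocks $\hat G(i,k)$, they do \emph{not} vanish from the ``second piece'' --- the $k$th block column of the noise term is $\sum_{i\geq k}\mcal Z^{i+1}\mcal H_L(w)\hat G(i,k)$, not just $\mcal Z^{k+1}\mcal H_L(w)\hat G(k,k)$ --- but this full sum is exactly what $\OZ(I_L\otimes\mcal Z\mcal H_L(w))\hat{\mcal G}$ produces, so your conclusion stands.
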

\begin{proof}
First, we see that by construction, $\hat\PHI_x \in \mathcal{L}_{TV}^{L, n \times n}, \hat\PHI_u \in \mathcal{L}_{TV}^{L, m \times n}$, and $\DDelta\in \mathcal{L}_{TV}^{L, n \times n}$ is strictly block-lower-triangular, and thus it suffices to verify that they satisfy the approximate achievability constraint \eqref{eqn:approx-sls-constr}.  

Next, we observe that for $k=0,\dots, L-1$, it holds that
\begin{align*}
    (I_{nL \times nL}-\mcal{Z}_{nL \times nL}\mcal A)\mcal{Z}_{nL \times nL}^k &= \mcal Z_{nL \times nL}^k(I_{nL \times nL}-\mcal Z_{nL \times nL}\mcal A),\\
     -\mcal Z_{nL \times nL}\mcal B\mcal Z_{mL \times mL}^k &= -\mcal Z_{nL \times nL}^k\mcal Z_{nL \times  nL}\mcal B,
\end{align*}
where we use subscripts in the above to denote matrix dimensions of $I$ and $\mcal Z$.

Combining the above with equation \eqref{eqn:data-only-constr}, we then have that
    \begin{multline}\label{eq:approx-resp-helper}
        \begin{bmatrix} (I - \mcal{Z} \mcal{A}) & -\mcal{Z} \mcal{B} \end{bmatrix}
        \begin{bmatrix}
            \OZ (I_L \otimes \mcal{H}_L (\tilde x)) \\
            \OZ (I_L \otimes \mcal{H}_L (u))
        \end{bmatrix}
        =
        \OZ\left(I_L \otimes [(I-\mcal Z\mcal A), -\mcal Z\mcal B]\begin{bmatrix}\mcal H_L(\tilde x)\\ \mcal H_L(u)\end{bmatrix}\right)\\
        =
        \OZ
        \parens{I_L \otimes
            \parens{\begin{bmatrix} \mcal{H}_1 (\tilde x) \\ 0 \end{bmatrix} + \mcal{Z} \mcal{H}_L (w)}}
    \end{multline}
    
    Consider a \(\hat{\mcal{G}}\in\mcal{L}_{TV}\) constructed as described in the Theorem statement.
    Right-multiplying the left-hand-side of equation \eqref{eq:approx-resp-helper} by \(\hat{\mcal{G}}\) yields the desired left-hand-side of constraint \eqref{eqn:approx-sls-constr}.  Further, notice that from the definition of $\Gamma(x)$ and the construction of $\hat{\mcal G}$, we have that 
    \begin{align*}
        \OZ
        \parens{I_L \otimes \begin{bmatrix} \mcal{H}_1 (\tilde x) \\ 0 \end{bmatrix}} \mcal{\hat G} + \OZ (I_L \otimes \mcal{Z} \mcal{H}_L (w))\mcal{\hat G}
        = I_{nL} + \DDelta,
    \end{align*}
    concluding the proof.
\end{proof}

Theorem~\ref{thm:noisy-data-sls} thus allows us to apply the robust SLS parameterization of Theorem \ref{thm:robust-sls} to characterize the closed-loop behavior \eqref{eq:approx-responses} achieved by a controller constructed from the data-driven approximate system responses $\{\hat\PHI_x,\hat\PHI_u\}$ in terms of the perturbation term $\DDelta$, as described in equation \eqref{eqn:hats}.  In particular, if we assume that $\norm{\mcal H(w)}_2\leq \varepsilon$, but is otherwise acting adversarially, we can pose the following robust LQG problem:
\begin{align}
    \label{eqn:noisy-data-sls-lqr}
    \nonumber \minimize_{\hat{\mcal{G}}\in\mcal{L}_{TV}}\max_{\norm{\mcal{H}_L(w)}\leq \varepsilon}
        &\quad
            \norm{\begin{bmatrix} \mcal{Q}^{1/2} \\ & \mcal{R}^{1/2} \end{bmatrix}
            \begin{bmatrix}
            \OZ (I_L \otimes \mcal{H}_L (\tilde x)) \\
            \OZ (I_L \otimes \mcal{H}_L (u))
        \end{bmatrix}\hat{\mcal G}
            (I + \mcal{Z}_L (I_L \otimes \mcal{Z} \mcal{H}_L (w))
                \hat{\mcal{G}})^{-1}}_F \\
    \text{subject to}
            & \text{ $\hat{G}(i,i)\in \Gamma(\tilde x)$ for $i=0,1,\dots,L-1$,}\\& \text{ $\mcal{H}_1(x)\hat{G}(i,j)=0$ for all $i\neq j$}\nonumber.
\end{align}

Note that although we assume that the disturbance process is drawn as $w(t)\iid\mathcal{N}(0,\sigma^2 I)$, we conservatively treat the effects of the unknown Hankel matrix $\mcal H_L(w)$ on the estimated system responses as adversarial in our analysis.  This approach also allows our method to generalize naturally to other optimal control settings, such as those with $\mcal H_\infty$ and $\mcal L_1$ cost functions.

The objective function of optimization problem \eqref{eqn:noisy-data-sls-lqr} is a non-convex, but its structure allows for a transparent and data-independent quasi-convex upper-bound to be derived.  First, we observe that we can upper bound $\norm{\DDelta}_2$ as given in equation \eqref{eqn:hats}, by
\begin{equation}
    \twonorm{\DDelta} = \twonorm{\OZ
                (I_L \otimes \mcal{Z} \mcal{H}_L (w))
                \hat{\mcal{G}}}\leq \twonorm{\OZ}\twonorm{\mcal{H}_L (w)}\twonorm{\mcal{\hat G}}\leq \sqrt{L}\varepsilon \twonorm{\mcal{\hat G}},
                \label{eq:delta-bound}
\end{equation}
from which it follows immediately that if $\sqrt{L}\varepsilon\twonorm{\mcal{\hat G}}<1$, we have the following upper bound 
\[\twonorm{(I+\DDelta)^{-1}} \leq \frac{1}{1-\sqrt{L}\varepsilon\twonorm{\mcal{\hat G}}}.\]  

This observation allows us to follow a similar argument as in \cite{dean2019sample} to derive the following quasi-convex upper bound to problem \eqref{eqn:noisy-data-sls-lqr}:
\begin{align}
    \label{eqn:robust-data-sls-lqr}
    \minimize_{\gamma \in [0, 1), \, \hat{\mcal{G}}}
        &\quad
            \frac{1}{1 - \gamma}
            \norm{\begin{bmatrix} \mcal{Q}^{1/2} \\ & \mcal{R}^{1/2} \end{bmatrix}
            \begin{bmatrix}
            \OZ (I_L \otimes \mcal{H}_L (\tilde x)) \\
            \OZ (I_L \otimes \mcal{H}_L (u))
        \end{bmatrix}\hat{\mcal G}}_F \\
    \text{subject to}
        &\quad \twonorm{\mcal{\hat G}}\leq \frac{\gamma}{\sqrt{L}\varepsilon},\text{ $\hat{G}(i,i)\in \Gamma(\tilde x)$ for $i=0,1,\dots,L-1$, }\nonumber\\&\quad\text{$\mcal{H}_1(\tilde x)\hat{G}(i,j)=0$ for $i\neq j$}.
    \nonumber
\end{align}
which is quasi-convex in \((\gamma, \hat{\mcal{G}})\), allowing for an efficient solution via bisection.
\section{Sub-optimality Analysis}
\label{sec:subopt}
In this section, we prove the following sub-optimality result, which relates the performance $\hat J$ achieved by the controller synthesized via the robust synthesis problem \eqref{eqn:robust-data-sls-lqr} to the optimal performance $J^\star$ achieved by the optimal LQG controller.

To state the our main result, we let $G_k^\star$ denote the optimal solution to the $L-1-k$ (recall indexing starts at zero) horizon LQG problem, as defined in the Theorem \ref{thm:noiseless-data-sls}.  Further, let:
    \begin{align*}
       {\mcal{O}_L (A)}
            := \begin{bmatrix}
                I \\ A \\ \vdots \\ A^{L-1}
            \end{bmatrix},
        \qquad
        \mcal{T}_L (X)
            := \begin{bmatrix}
                    0 \\
                    X & 0 \\
                    AX & X & \ddots \\
                    \vdots & \vdots & \ddots & \ddots \\
                    A^{L - 2} & A^{L - 3} & \cdots & X & 0
                \end{bmatrix}.
    \end{align*}

\begin{theorem}
    \label{thm:subopt}
    Let \((\hat{\mcal{G}}, \hat{\gamma})\) be the optimal solution to
    \eqref{eqn:robust-data-sls-lqr}, let $\hat J$ be the LQG cost that the controller $\hat{\mcal K} = \hat\PHI_u\hat\PHI_x^{-1}$ constructed from the system responses \eqref{eqn:hats} achieves on system \eqref{eq:lti}.  Assume that $T\geq 2L+1$, that $\twonorm{\mcal H_L(w)}\leq \varepsilon$, and that $\varepsilon$ satisfies the bounds \eqref{eq:eps-bound}. Let $\mcal{G}^\star =
        \mrm{blkdiag}(G_0 ^\star, \ldots, G_{L - 1}^\star),$ with 
        $G_0 ^\star, \ldots, G_{L - 1} ^\star \in \Gamma(x)$ the parameters to the optimal LQG system responses as defined above, and let $\{\PHI_x^\star, \PHI_u^\star\} = \{\OZ (I_L \otimes \mcal H_L(x)) \mcal G^\star, \OZ (I_L \otimes \mcal H_L(u)) \mcal G^\star\}.$  Letting \(J^\star\) be the optimal LQG cost achieved by the resulting optimal controller $\mcal K_\star = \PHI_u^\star (\PHI_x^\star)^{-1}$, we then have that
    \begin{align*}
        \frac{\hat{J} - J^\star}{J^\star}
        \leq 6\twonorm{\mcal G^\star}\varepsilon
            \parens{2\sqrt{L} + {\norm{\mcal{T}_{T - L + 1} (I)}_2}
            +
            \frac{L (1+\twonorm{\mcal O_L(A)}) \lVert Q^{1/2} \rVert_F \twonorm{\mcal T_{T-L+1}(I)}}{J^\star}}
    \end{align*}
\end{theorem}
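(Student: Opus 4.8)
The plan is to follow the three‑part robust‑SLS sub‑optimality argument of \cite{dean2019sample}: (i) upper bound the achieved cost $\hat J$ by the optimal value of the quasi‑convex program \eqref{eqn:robust-data-sls-lqr}; (ii) exhibit a feasible point of \eqref{eqn:robust-data-sls-lqr} built from the true optimal parameter $\mcal G^\star$ whose objective is within an $O(\varepsilon)$ factor of $J^\star$; and (iii) telescope the resulting gap into the three displayed terms. For (i), Theorem~\ref{thm:noisy-data-sls} shows the data‑driven responses $\{\hat\PHI_x,\hat\PHI_u\}$ generated by the optimizer $\hat{\mcal G}$ satisfy \eqref{eqn:approx-sls-constr} with $\DDelta = \OZ(I_L\otimes\mcal Z\mcal H_L(w))\hat{\mcal G}$, and since $\hat G(i,i)\in\Gamma(\tilde x)$ the operator $\hat\PHI_x$ is block‑lower‑triangular with identity diagonal, hence invertible; Theorem~\ref{thm:robust-sls} then identifies the closed‑loop map and gives $\hat J = \norm{\operatorname{diag}(\mcal Q^{1/2},\mcal R^{1/2})[\hat\PHI_x;\hat\PHI_u](I+\DDelta)^{-1}}_F$. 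Using $\twonorm{\DDelta}\le\sqrt L\varepsilon\twonorm{\hat{\mcal G}}\le\hat\gamma<1$ (feasibility of \eqref{eqn:robust-data-sls-lqr} with \eqref{eq:delta-bound}) and the standing assumption $\twonorm{\mcal H_L(w)}\le\varepsilon$, the bound $\twonorm{(I+\DDelta)^{-1}}\le(1-\hat\gamma)^{-1}$ gives $\hat J\le\frac{1}{1-\hat\gamma}\norm{\operatorname{diag}(\mcal Q^{1/2},\mcal R^{1/2})[\hat\PHI_x;\hat\PHI_u]}_F$, which is exactly the optimal value of \eqref{eqn:robust-data-sls-lqr}.

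For (ii), note first that by Theorem~\ref{thm:noisy-data-sls} applied to noise‑free data the pair $\{\PHI_x^\star,\PHI_u^\star\}=\{\OZ(I_L\otimes\mcal H_L(x))\mcal G^\star,\OZ(I_L\otimes\mcal H_L(u))\mcal G^\star\}$ satisfies the exact constraint \eqref{eqn:sls-constr} and achieves $J^\star$. Since the input signal is shared by the noisy and noise‑free data, $\mcal H_L(u)$ is unchanged and the only discrepancy is $\mcal H_L(\tilde x)-\mcal H_L(x)$; taking the noise‑free shadow trajectory with the same initial condition and input as the collected data, this difference is the Hankel matrix of the accumulated‑noise signal $\delta(t)=\sum_{s<t}A^{t-1-s}w(s)$, and routine manipulations bound $\twonorm{\mcal H_L(\tilde x)-\mcal H_L(x)}$ and $\twonorm{\mcal H_1(\tilde x)-\mcal H_1(x)}$ by $\twonorm{\mcal T_{T-L+1}(I)}\varepsilon$ and its $\mcal O_L(A)$‑weighted variants. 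We then correct $\mcal G^\star$ to a feasible $\hat{\mcal G}_0$: keep the off‑diagonal blocks zero (so $\mcal H_1(\tilde x)\hat G_0(i,j)=0$ holds trivially) and replace each diagonal block by its minimum‑norm correction $\hat G_0(k,k)=G_k^\star+\Delta_k$ with $\mcal H_1(\tilde x)\hat G_0(k,k)=I$, which exists since $\mcal H_1(\tilde x)$ has full row rank by persistence of excitation and obeys $\twonorm{\Delta_k}\lesssim\varepsilon\twonorm{\mcal T_{T-L+1}(I)}\twonorm{G_k^\star}$ up to data‑conditioning factors controlled by \eqref{eq:eps-bound}. Choosing $\gamma_0=\sqrt L\varepsilon\twonorm{\hat{\mcal G}_0}$ makes the norm constraint tight, and \eqref{eq:eps-bound} forces $\gamma_0\le\tfrac12$, so $(\gamma_0,\hat{\mcal G}_0)$ is feasible and $(1-\gamma_0)^{-1}\le2$.

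For (iii), combining (i) and (ii) gives $\hat J-J^\star\le\frac{1}{1-\gamma_0}\norm{M_{\tilde x}\hat{\mcal G}_0}_F-\norm{M_x\mcal G^\star}_F$ with $M_\bullet:=\operatorname{diag}(\mcal Q^{1/2},\mcal R^{1/2})[\OZ(I_L\otimes\mcal H_L(\bullet));\OZ(I_L\otimes\mcal H_L(u))]$. Writing $\norm{M_{\tilde x}\hat{\mcal G}_0}_F\le J^\star+\norm{M_{\tilde x}\hat{\mcal G}_0-M_x\mcal G^\star}_F$ and $M_{\tilde x}\hat{\mcal G}_0-M_x\mcal G^\star=M_{\tilde x}(\hat{\mcal G}_0-\mcal G^\star)+(M_{\tilde x}-M_x)\mcal G^\star$, the $\frac{1}{1-\gamma_0}-1=\frac{\gamma_0}{1-\gamma_0}$ piece contributes an $O(\sqrt L\,\varepsilon\twonorm{\mcal G^\star})$ relative term (the $2\sqrt L$ term, via $(1-\gamma_0)^{-1}\le2$ and $\twonorm{\hat{\mcal G}_0}\le\twonorm{\mcal G^\star}(1+o(1))$); the $\hat{\mcal G}_0-\mcal G^\star$ piece contributes the $O(\varepsilon\twonorm{\mcal T_{T-L+1}(I)}\twonorm{\mcal G^\star})$ term; and $(M_{\tilde x}-M_x)\mcal G^\star=[\mcal Q^{1/2}\OZ(I_L\otimes(\mcal H_L(\tilde x)-\mcal H_L(x)))\mcal G^\star;0]$, whose Frobenius norm is bounded using $\norm{\mcal Q^{1/2}\,\cdot\,}_F\le\norm{Q^{1/2}}_F\twonorm{\cdot}$, $\twonorm{\OZ}=\sqrt L$, the $\mcal O_L(A),\mcal T_{T-L+1}(I)$ Hankel‑difference bound, and a further factor $L$ from the $L$ block columns — dividing by $J^\star$ yields the last term. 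Summing the three contributions and absorbing all absolute constants into the leading $6$ gives the claim.

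The main obstacle is steps (ii)–(iii): precisely relating the noisy and noise‑free Hankel matrices to $\mcal O_L(A)$ and $\mcal T_{T-L+1}(I)$ (identifying $\mcal H_L(\tilde x)-\mcal H_L(x)$ with a Hankel of the accumulated‑noise process and controlling its spectral norm), verifying that the corrected parameter $\hat{\mcal G}_0$ is simultaneously feasible for \eqref{eqn:robust-data-sls-lqr} and close enough to $\mcal G^\star$, and carrying out the careful accounting of which factor of $L$, of $\norm{Q^{1/2}}_F$, and of $1/J^\star$ attaches to which term — all under the standing bound \eqref{eq:eps-bound} that keeps the data‑conditioning quantities (e.g.\ $\twonorm{\mcal H_1(\tilde x)^\dagger}$, $\twonorm{\mcal H_L(u)}$) from appearing explicitly. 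The rest is routine use of the reverse triangle inequality and sub‑multiplicativity.
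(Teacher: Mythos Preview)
Your three-part strategy matches the paper exactly, and step (i) is correct as stated. The gap is in the specific construction of the feasible point in step (ii) and how it feeds into step (iii).

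The paper does not use an additive minimum-norm correction $\hat G_0(k,k)=G_k^\star+\Delta_k$. Instead (Lemma~\ref{lem:feas-soln}) it takes the \emph{right-multiplicative} correction $\mcal G^0=\mcal G^\star(I+\mcal D)^{-1}$ with $\mcal D=\mrm{blkdiag}(D_0,\dots,D_{L-1})$ and $D_k=\mcal W_{[0,T-L]}G_k^\star$, where $\mcal W(t)=\sum_{s<t}A^{t-1-s}w(s)$. The Hankel-difference identity is also sharper than a single Hankel of the accumulated noise: Lemma~\ref{lem:noisy-hankel-dynamics} gives the two-term decomposition $\mcal H_L(\tilde x)=\mcal H_L(x)+\mcal T_L(I)\mcal H_L(w)+\mcal O_L(A)\mcal W_{[0,T-L]}$, and in particular $\mcal H_1(\tilde x)=\mcal H_1(x)+\mcal W_{[0,T-L]}$. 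Hence $\mcal H_1(\tilde x)G_k^\star=I+D_k$, so $G_k^\star(I+D_k)^{-1}\in\Gamma(\tilde x)$ exactly, with no pseudoinverse needed; and \eqref{eq:eps-bound} directly gives $\twonorm{D_k}\le 1/2$, so $\twonorm{\mcal G^0}\le 2\twonorm{\mcal G^\star}$ and $\gamma^0=2\sqrt L\,\varepsilon\twonorm{\mcal G^\star}$ is feasible.

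The reason the multiplicative form is essential for step (iii): it lets the correction factor out as a scalar,
\[
\norm{M_{\tilde x}\mcal G^0}_F=\norm{M_{\tilde x}\mcal G^\star(I+\mcal D)^{-1}}_F\le \frac{1}{1-\alpha(T)\varepsilon}\,\norm{M_{\tilde x}\mcal G^\star}_F,\qquad \alpha(T)=\twonorm{\mcal T_{T-L+1}(I)}\twonorm{\mcal G^\star},
\]
leaving only $\norm{M_{\tilde x}\mcal G^\star}_F$ to compare with $J^\star=\norm{M_x\mcal G^\star}_F$ via the Hankel-difference term $(M_{\tilde x}-M_x)\mcal G^\star$. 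The two scalar prefactors $(1-\gamma^0)^{-1}$ and $(1-\alpha(T)\varepsilon)^{-1}$ then produce the $2\sqrt L$ and $\twonorm{\mcal T_{T-L+1}(I)}$ pieces of the bound, and the $(M_{\tilde x}-M_x)\mcal G^\star$ piece produces the third term. Your additive decomposition instead leaves $M_{\tilde x}(\hat{\mcal G}_0-\mcal G^\star)$ with the correction to the \emph{right} of $M_{\tilde x}$; bounding that term in Frobenius norm forces either $\twonorm{M_{\tilde x}}$ (hence $\twonorm{\mcal H_L(\tilde x)}$, $\twonorm{\mcal H_L(u)}$) or $\twonorm{\mcal H_1(\tilde x)^\dagger}$ into the estimate. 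None of these quantities is controlled by \eqref{eq:eps-bound}, which involves only $\twonorm{\mcal G^\star}$, $L$, and $\twonorm{\mcal T_{T-L+1}(I)}$, so the assertion that the data-conditioning factors ``do not appear explicitly'' is not justified by your construction. Switching to the right-multiplicative feasible point $\mcal G^\star(I+\mcal D)^{-1}$ closes this gap; the remainder of your accounting then goes through.
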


Our proof strategy is to construct a feasible solution to problem \eqref{eqn:robust-data-sls-lqr} using the optimal \(\mcal{G}^\star\) defined in the theorem statement, such that $\{\PHI_x^\star, \PHI_u^\star\} = \{\OZ (I_L \otimes \mcal H_L(x)) \mcal G^\star, \OZ (I_L \otimes \mcal H_L(u)) \mcal G^\star\}$ for data $\{\xt,\ut\}$ generated by system \eqref{eq:lti} with no driving noise.\footnote{Such a $\mcal G^\star$ exists by Theorem \ref{thm:noiseless-data-sls}, and we select the minimum norm $\mcal G^\star$ satisfying the desired relationship.  Future work will seek explicit relationships between the norms of the system responses, data matrices, and $\mcal G^\star$.}  First, we introduce the following technical lemma relating Hankel matrices constructed from state trajectories of system \eqref{eq:lti} with and without driving noise.

\begin{lemma}
    \label{lem:noisy-hankel-dynamics}
    Let $\xt$ and $\tilde x_{[0,T-1]}$ be the state signals for
    system \eqref{eq:lti}, driven by noise-free \(\ut\) and noisy \(\{\ut, w_{[-1,T-2]}\}\), respectively.
    Then the following holds
 \[\mcal{H}_L (\tilde x)
        =
        \mcal{H}_L (x)
        + \mcal{T}_L (I) \mcal{H}_L (w)
        + \mcal{O}_L (A) \mcal{W}_{[0, T - L]},\] 
        where 
        \(\mcal{W}(t) = \sum_{k = 0}^{t - 1} A^{t - 1 - k} w(k)\)
    are columns of  
    \(\mcal{W}_{[0, T-L]}
            = \begin{bmatrix} \mcal{W}(0) & \cdots & \mcal{W}(T - L) \end{bmatrix}\).\footnote{We let $\mathcal{W}(0) = 0$ by convention.}
\end{lemma}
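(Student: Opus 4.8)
The plan is to first establish a column-wise identity relating the $j$-th column of $\mcal{H}_L(\tilde x)$ to the corresponding columns of $\mcal{H}_L(x)$ and $\mcal{H}_L(w)$, and then stack the resulting expressions to recover the claimed matrix identity. Fix a column index $j \in \{0,1,\dots,T-L\}$. The $j$-th column of $\mcal{H}_L(\tilde x)$ is the length-$L$ window $\tilde x_{[j,\,j+L-1]}$, and similarly for $x$ and $w$ (with the convention $w(-1)=x(0)=0$ in this section, and the disturbance channel indexing shifted appropriately). The first step is to write out, for each offset $i \in \{0,\dots,L-1\}$, the explicit solution of the LTI recursion \eqref{eq:lti}: since the noisy and noise-free trajectories share the same input $\ut$ and (here) both start from zero initial condition, the difference $\tilde x(t) - x(t)$ depends only on the accumulated disturbance, and one gets $\tilde x(j+i) = x(j+i) + \sum_{k=0}^{j+i-1} A^{\,j+i-1-k} w(k)$ by unrolling the dynamics.

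The second step is to split that disturbance sum at the index $k = j$, i.e. $\sum_{k=0}^{j+i-1} = \sum_{k=j}^{j+i-1} + \sum_{k=0}^{j-1}$. The first partial sum $\sum_{k=j}^{j+i-1} A^{\,j+i-1-k} w(k)$ involves only the disturbances $w(j),\dots,w(j+i-1)$, which are exactly the entries appearing in the $j$-th column of $\mcal{H}_L(w)$; collecting these across $i=0,\dots,L-1$ produces precisely the $j$-th column of $\mcal{T}_L(I)\,\mcal{H}_L(w)$, since $\mcal{T}_L(I)$ is the strictly-block-lower-triangular Toeplitz operator with $(i,k)$-block equal to $A^{\,i-1-k}$ (for $i>k$). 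The second partial sum $\sum_{k=0}^{j-1} A^{\,j+i-1-k} w(k) = A^{i}\big(\sum_{k=0}^{j-1} A^{\,j-1-k} w(k)\big) = A^{i}\,\mcal{W}(j)$ factors cleanly, and stacking over $i$ gives $\mcal{O}_L(A)\,\mcal{W}(j)$, which is the $j$-th column of $\mcal{O}_L(A)\,\mcal{W}_{[0,T-L]}$. Note $\mcal{W}(0)=0$ by convention, consistent with $j=0$ giving an empty inner sum. Assembling the three pieces column by column yields $\mcal{H}_L(\tilde x) = \mcal{H}_L(x) + \mcal{T}_L(I)\,\mcal{H}_L(w) + \mcal{O}_L(A)\,\mcal{W}_{[0,T-L]}$.

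The only real care needed — and the step I would flag as the main bookkeeping obstacle rather than a genuine difficulty — is getting the indexing conventions exactly right: the disturbance signal in the paper is $w_{[-1,T-2]}$ with $w(-1)=x(0)$, so one must be consistent about whether the Hankel matrix $\mcal{H}_L(w)$ is built from $w(0),\dots,w(T-2)$ or includes the initial-condition slot, and correspondingly about the off-by-one in the exponents of $A$ inside $\mcal{T}_L(I)$ and $\mcal{O}_L(A)$. Once the split-at-$k=j$ trick is in place, everything else is a direct matching of stacked columns against the definitions of $\mcal{T}_L(\cdot)$ and $\mcal{O}_L(\cdot)$ given just before the lemma, with no inequalities or estimates involved — the identity is exact.
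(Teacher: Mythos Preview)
Your proposal is correct and follows essentially the same column-by-column argument as the paper: the paper writes each column directly in block form as $\tilde x_{[t,t+L-1]} = \mcal{O}_L(A)\tilde x(t) + \mcal{T}_L(B)u_{[t,t+L-1]} + \mcal{T}_L(I)w_{[t,t+L-1]}$ and then substitutes $\tilde x(t) = x(t) + \mcal{W}(t)$, which is exactly your unroll-and-split-at-$k=j$ argument expressed in block-operator notation rather than entrywise sums. Your flagged indexing caveat is apt but does not affect the argument.
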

\begin{proof}
    Let \(x_{[t, t-L+1]}\) and \(\tilde x_{[t, t+L-1]}\)
    be an arbitrary pair of columns of \(\mcal{H}_L (x)\) and \(\mcal{H}_L (\tilde x)\).
    Then, by the dynamics of system \eqref{eq:lti}, we have that \begin{align*}
        x_{[t, t + L - 1]} &= \mcal{O}_L (A) x(t) + \mcal{T}_L (B) u_{[t, t + L - 1]},\\
        \tilde x_{[t, t + L - 1]} &= \mcal{O}_L (A) \tilde x(t) + \mcal{T}_L (B) u_{[t, t+L-1]} + \mcal{T}_L (I) w_{[t, t+L-1]}.
    \end{align*}  
    
    Since \(\tilde x(0) = x(0)\),
    using the same control sequence \(\ut\) on both systems reveals
    the perturbation factor \(\tilde x(t) = x(t) + \mcal{W}(t)\).
    Consequently, 
    \[\tilde x_{[t, t + L - 1]}
            = \mcal{O}_L (A) (x(t) + \mcal{W}(t)) + \mcal{T}_L (B) u_{[t, t+L-1]} + \mcal{T}_L (I) w_{[t, t+L-1]}.\]  
    Extending this to all columns of the Hankel matrices completes the proof.
\end{proof}

We now use Lemma~\ref{lem:noisy-hankel-dynamics} to construct a feasible solution to the robust optimization problem \eqref{eqn:robust-data-sls-lqr} using the optimal solution $\mcal G^\star$, which is subsequently used to prove the main result of this section.

\begin{lemma}
    \label{lem:feas-soln}
    Let $\xt$ and $\tilde x_{[0,T-1]}$ be the state signals for
    system \eqref{eq:lti}, driven by noiseless \(\ut\) and noisy \(\{\ut, w_{[-1,T-2]}\}\), respectively, and suppose that \(\{\ut, w_{[-1,T-2]}\}\) are PE of order \(n + L\).  Let $\mcal{G}^\star =
    \mrm{blkdiag}(G_0 ^\star, \ldots, G_{L - 1}^\star),$ with 
    $G_0 ^\star, \ldots, G_{L - 1} ^\star \in \Gamma(x)$, be the parameter to the optimal LQG system responses in Theorem \ref{thm:noiseless-data-sls}.
    Then, if
    \begin{align}\label{eq:eps-bound}
        \varepsilon
        \leq \min \braces{
            \frac{1}{3 \sqrt{L} \norm{\mcal{G}^\star}_2},
            \enskip
            \frac{1}{2\norm{\mcal{G}^\star}_2 \cdot \norm{\mcal{T}_{T - L + 1} (I)}_2}},
    \end{align}
    the pair \(\{\mcal{G}^0 = \mcal{G}^\star (I + \mcal{D})^{-1},
        \enskip \gamma^0 = 2\varepsilon\norm{\mcal{G}^\star}_2\sqrt{ L}\}\)
    is a feasible solution to \eqref{eqn:robust-data-sls-lqr}
    where
    \begin{align*}
        \mcal{D} = \mathrm{blkdiag}(D_0, \ldots, D_{L - 1}),
        \qquad
        D_k = \mcal{W}_{[0, T-L]} G_k ^\star,
        \quad \text{for} \enskip 0 \leq k \leq L - 1.
    \end{align*}
\end{lemma}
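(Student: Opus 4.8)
The plan is to verify directly that $(\mcal{G}^0, \gamma^0)$ meets the four constraints of \eqref{eqn:robust-data-sls-lqr}: $\gamma^0 \in [0,1)$; $\twonorm{\mcal{G}^0} \le \gamma^0/(\sqrt{L}\varepsilon)$; $\mcal{G}^0(i,i) \in \Gamma(\tilde x)$ for every $i$; and $\mcal{H}_1(\tilde x)\mcal{G}^0(i,j) = 0$ for $i \ne j$. Two structural facts drive everything. First, since $\mcal{G}^\star$ and $\mcal{D}$ are block-diagonal, so is $\mcal{G}^0 = \mcal{G}^\star(I+\mcal{D})^{-1} = \mrm{blkdiag}(G_0^\star(I+D_0)^{-1}, \ldots, G_{L-1}^\star(I+D_{L-1})^{-1})$; this immediately gives the off-diagonal constraint (its off-diagonal blocks vanish) and reduces the norm and membership constraints to blockwise claims. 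Second, taking the first block-row of Lemma~\ref{lem:noisy-hankel-dynamics} and using that $\mcal{T}_L(I)$ has a zero top block-row while $\mcal{O}_L(A)$ has top block-row $I$, we obtain $\mcal{H}_1(\tilde x) = \mcal{H}_1(x) + \mcal{W}_{[0,T-L]}$.

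Given these, two of the four constraints are almost immediate. For membership: since $G_k^\star \in \Gamma(x)$ and $D_k = \mcal{W}_{[0,T-L]}G_k^\star$, the identity above yields $\mcal{H}_1(\tilde x)G_k^\star = I + D_k$, so $\mcal{H}_1(\tilde x)\,G_k^\star(I+D_k)^{-1} = I$, i.e.\ $\mcal{G}^0(k,k) \in \Gamma(\tilde x)$ — once we know $I + D_k$ is invertible. For $\gamma^0 \in [0,1)$: the first half of \eqref{eq:eps-bound} gives $\gamma^0 = 2\sqrt{L}\varepsilon\twonorm{\mcal{G}^\star} \le 2/3 < 1$.

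The remaining work — and the crux — is the norm bound on $\mcal{D}$, which supplies both the invertibility of $I + \mcal{D}$ and the constraint $\twonorm{\mcal{G}^0} \le \gamma^0/(\sqrt{L}\varepsilon) = 2\twonorm{\mcal{G}^\star}$. Because $\mcal{D}$ is block-diagonal, $\twonorm{\mcal{D}} = \max_k \twonorm{\mcal{W}_{[0,T-L]}G_k^\star} \le \twonorm{\mcal{W}_{[0,T-L]}}\twonorm{\mcal{G}^\star}$. I would bound $\twonorm{\mcal{W}_{[0,T-L]}}$ using the recursion $\mcal{W}(t+1) = A\mcal{W}(t) + w(t)$, $\mcal{W}(0) = 0$ (established inside the proof of Lemma~\ref{lem:noisy-hankel-dynamics}): stacking the columns of $\mcal{W}_{[0,T-L]}$ exhibits that vector as $\mcal{T}_{T-L+1}(I)\,w_{[0,T-L]}$, so passing from the spectral to the Frobenius norm and using submultiplicativity with the data-noise bound $\twonorm{\mcal{H}_L(w)} \le \varepsilon$ gives $\twonorm{\mcal{W}_{[0,T-L]}} \le \twonorm{\mcal{T}_{T-L+1}(I)}\,\varepsilon$. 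The second half of \eqref{eq:eps-bound} then yields $\twonorm{\mcal{D}} \le \varepsilon\twonorm{\mcal{T}_{T-L+1}(I)}\twonorm{\mcal{G}^\star} \le 1/2$, hence $I + \mcal{D}$ (and each $I + D_k$) is invertible with $\twonorm{(I+\mcal{D})^{-1}} \le 1/(1-\twonorm{\mcal{D}}) \le 2$; this completes the membership check and bounds $\twonorm{\mcal{G}^0} \le \twonorm{\mcal{G}^\star}\twonorm{(I+\mcal{D})^{-1}} \le 2\twonorm{\mcal{G}^\star}$, which is exactly $\gamma^0/(\sqrt{L}\varepsilon)$.

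The main obstacle is this last norm estimate: cleanly identifying $\mcal{W}_{[0,T-L]}$ as a $\mcal{T}_{T-L+1}(I)$-structured linear image of the noise window and controlling the residual factor by the assumed noisy-data bound. Everything else is bookkeeping, and one should note that the constants $3$ and $2$ in \eqref{eq:eps-bound} are calibrated precisely so that $\gamma^0 \le 2/3 < 1$ and $\twonorm{(I+\mcal{D})^{-1}} \le 2$, the two facts the downstream sub-optimality argument needs from this feasible point.
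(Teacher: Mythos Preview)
Your proposal is correct and follows essentially the same route as the paper: block-diagonality of $\mcal{G}^0$ handles the off-diagonal constraint, the first-block-row identity $\mcal{H}_1(\tilde x)=\mcal{H}_1(x)+\mcal{W}_{[0,T-L]}$ from Lemma~\ref{lem:noisy-hankel-dynamics} yields $\mcal{H}_1(\tilde x)G_k^\star=I+D_k$ for the $\Gamma(\tilde x)$ membership, and the vectorization identity $\mathrm{vec}(\mcal{W}_{[0,T-L]})=\mcal{T}_{T-L+1}(I)\,w_{[0,T-L]}$ combined with the $\varepsilon$-bound gives $\twonorm{\mcal{D}}\le 1/2$, hence $\twonorm{(I+\mcal{D})^{-1}}\le 2$ and $\twonorm{\mcal{G}^0}\le 2\twonorm{\mcal{G}^\star}=\gamma^0/(\sqrt{L}\varepsilon)$. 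The only place you are terser than the paper is the passage from $\twonorm{\mcal{H}_L(w)}\le\varepsilon$ to a bound on $\twonorm{w_{[0,T-L]}}$; the paper makes this explicit by observing that $w_{[0,T-L]}$ can be recovered from the Hankel matrix via a norm-one selection (left-inverse) map.
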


\begin{proof}
    We first show that the candidate system responses $\mcal G^0$ satisfies the algebraic constraints of robust optimization problem \eqref{eqn:noisy-data-sls-lqr} defined in terms of the noisy data $\{\tilde x_{[0,T-1]}, \ut\}$.  First, note that since $\mcal G^0$ is block-diagonal, we trivially have that $\mcal H_1(\tilde x)G^0(i,j) = 0$ for $i\neq j$.  Thus it suffices to verify that each $G^0_k = G^\star_k (I+D_k)^{-1} \in \Gamma(\tilde x)$, i.e., that it satisfies $\mcal H_1(\tilde x)G^0_k = I$.

    First, the relation \(\mcal{H}_1 (\tilde x) = \mcal{H}_1 (x) + \mcal{W}_{[0, T-L]}\)
    follows from Lemma~\ref{lem:noisy-hankel-dynamics}
    by examining the first block row of \[
    \mcal{H}_L (\tilde x)
        =
        \mcal{H}_L (x)
        + \mcal{T}_L (I) \mcal{H}_L (w)
        + \mcal{O}_L (A) \mcal{W}_{[0, T - L]}.
    \]
    The first block row of \(\mcal{T}_L (I) \mcal{H}_L (w)\) is zero
    and the top block of \(\mcal{O}_L (A)\) is \(I\).
    With this identity, right
    multiplying \(\mcal{H}_1 (\tilde x)\) by any \(G_k ^\star \in \Gamma(x)\) yields
    \begin{align*}
        \mcal{H}_1 (\tilde x) G_k ^\star
        = \mcal{H}_1 (x) G_k ^\star + \mcal{W}_{[0, T-L]} G_k ^\star
        = I + D_k
    \end{align*}
    This implies that \(G_k^\star (I + D_k)^{-1} \in \Gamma(\tilde x)\)
    provided the inverse exists.  A sufficient condition for $(I+\mcal D)^{-1}$ to exist is that $\twonorm{\mcal D}<1$.
    We will now prove this fact, and the additional norm constraint in the robust optimization problem \eqref{eqn:noisy-data-sls-lqr}.
   
    First, the condition
    \(\gamma^0 = 2\varepsilon \norm{\mcal{G}^\star}_2\sqrt{L } \leq 2/3<1\) is satisfied
    by the assumption \eqref{eq:eps-bound}. Next,
    \begin{multline*}
        \twonorm{D_k} = \norm{\mcal{W}_{[0, T-L]} G_k ^\star}_2
        \leq \norm{\mcal{W}_{[0, T-L]}}_2 \cdot \norm{G_k ^\star}_2
        \leq \norm{\trm{vec} (W_{[0, T-L]})}_2 \cdot \norm{G_k ^\star}_2 \\
        = \norm{\mcal{T}_{T - L + 1} (I) \mcal H_{T-L+1}^\dag(\mcal H_{T-L+1}(w_{[0, T-L]}))}_2 \cdot \norm{G_k ^\star}_2 
        \leq \norm{\mcal{T}_{T - L + 1} (I)}_2 \twonorm{\mcal H_{T - L + 1} ^\dag} {\varepsilon} \norm{G_k ^\star}_2 \leq 1/2,
    \end{multline*}
    where we exploited the identity $\trm{vec} (W_{[0, T-L}) = \mcal{T}_{T - L + 1} (I) w_{[0, T-L]}$ in the second equality.  The final equalities follows from the fact that the linear map $w_{[0,T-L]} \mapsto \mcal H_{T-L+1}(w_{[0,T-L]})$ has a left-inverse $H^\dag_{T-L+1}$ with spectral norm bounded by one: this follows from the definition of the Hankel matrix, and by noting that $H^\dag_{T-L+1}$ extracts unique sub-elements of $H_{T-L+1}(w_{[0,T-L]})$ to reconstruct $w_{[0,T-L]}$, the norm bound assumption on $H_{T-L+1}(w_{[0,T-L]})$, and assumption \eqref{eq:eps-bound}.

    Thus each \(\norm{D_k}_2 \leq 1/2\), and consequently
    \(\lVert (I + \mcal{D})^{-1} \rVert_2 \leq 2\), proving that the candidate $\mcal G^0 \in \Gamma(\tilde x).$  Finally we show that \(\norm{\mcal G^0}_2 \leq \gamma^0/\varepsilon\sqrt{ L}\)
    is satisfied:
    \begin{equation*}
        \twonorm{\mcal G^0} =\twonorm{\mcal G^\star(I+\mcal D)^{-1}} \leq \frac{\twonorm{\mcal G^\star}}{1-\twonorm{\mcal D}} \leq 2\twonorm{\mcal G^\star} = \frac{2\twonorm{\mcal G^\star}\gamma^0}{2\twonorm{\mcal G^\star}\varepsilon \sqrt{L}} = \frac{\gamma^0}{\varepsilon\sqrt{ L}},
    \end{equation*}
    where the last inequality follows from the previously derived bound that $\twonorm{\mcal D}\leq 1/2$, and the final equality from the definition of $\gamma^0$.
\end{proof}

\begin{proof}[Proof of Theorem \ref{thm:subopt}]
From the derivation of \eqref{eqn:robust-data-sls-lqr},
    the cost $\hat J$ achieved by the system response matrices $\{\hat\PHI_x, \hat\PHI_u\} = \{\OZ (I_L \otimes \mcal H_L(\tilde x)) \mcal{\hat G}, \OZ (I_L \otimes \mcal H_L(u)) \mcal{\hat G}\}$ on the true dynamics is upper-bounded via
    \begin{align}
        \label{eqn:subopt-1}
        \hat{J}
            =  \norm{\begin{bmatrix} \mcal{Q}^{1/2} \\ & \mcal{R}^{1/2} \end{bmatrix}
            \begin{bmatrix} \hat{\PHI}_x \\ \hat{\PHI}_u \end{bmatrix}
            (I + \Delta)^{-1}}_F
        \leq \frac{1}{1 - \hat{\gamma}}
        \norm{\begin{bmatrix} \mcal{Q}^{1/2} \\ & \mcal{R}^{1/2} \end{bmatrix}
            \begin{bmatrix} \hat{\PHI}_x \\ \hat{\PHI}_u \end{bmatrix}}_F
    \end{align}
    where the RHS is the optimal value of \eqref{eqn:robust-data-sls-lqr}.
    Letting $\alpha(T) =  {\norm{\mcal{T}_{T - L + 1} (I)}_2\norm{\mcal G^\star}_2} $,
    then
    \begin{align*}
    \hat{J}
    &\stackrel{(a)}{\leq} \frac{1}{1 - \gamma^0}
        \norm{\begin{bmatrix} \mcal{Q}^{1/2} \\ & \mcal{R}^{1/2} \end{bmatrix}
        \begin{bmatrix}
            \OZ (I_L \otimes \mcal{H}_L (\tilde x)) \\
            \OZ (I_L \otimes \mcal{H}_L (u))
        \end{bmatrix}
        \mcal{G}_0}_F
        \\
    &\stackrel{(b)}{\leq} \frac{1}{(1 - \gamma^0)(1 - \alpha(T)\varepsilon)}
        \norm{\begin{bmatrix} \mcal{Q}^{1/2} \\ & \mcal{R}^{1/2} \end{bmatrix}
        \begin{bmatrix}
            \OZ (I_L \otimes \mcal{H}_L (\tilde x)) \\
            \OZ (I_L \otimes \mcal{H}_L (u))
        \end{bmatrix}
        \mcal{G}^\star}_F \\
    &\stackrel{(c)}{=}
        \frac{\norm{\begin{bmatrix}\mcal{Q}^{1/2} \\ & \mcal{R}^{1/2} \end{bmatrix}
                \begin{bmatrix}
                    \mcal{Z}_L (I_L \otimes \{\mcal{H}_L (x) + \mbf{\Delta}_w\} \\
                    \mcal{Z}_L (I_L \otimes \mcal{H}_L (u))
                \end{bmatrix}
                \mcal{G}^\star
            }_F}
            {(1 - \gamma^0)(1 - \alpha(T)\varepsilon)}
            \\
    &\stackrel{(d)}{\leq}
        \frac{J^\star +
            \norm{
                \begin{bmatrix}
                    \mcal{Q}^{1/2} \mbf{\Delta}_w
                    & \mcal{Z} \mcal{Q}^{1/2} \mbf{\Delta}_w
                    & \cdots
                    & \mcal{Z}^{L - 1} \mcal{Q}^{1/2} \mbf{\Delta}_w
                \end{bmatrix}}_F
            \norm{\mcal{G}^\star}_2}
        {(1 - \gamma^0) (1 - \alpha(T) \varepsilon)} \\
    &\stackrel{(e)}{\leq} \frac{J^\star +
            \sqrt{L}
            \norm{\mcal{Q}^{1/2} \mbf{\Delta}_w}_F
            \norm{\mcal{G}^\star}_2}
            {(1 - \gamma^0) (1 - \alpha(T) \varepsilon)} \\
    &\stackrel{(f)}{\leq} \frac{J^\star +
            \sqrt{L}
            \norm{\mcal{Q}^{1/2}}_F \norm{\mbf{\Delta}_w}_2
            \norm{\mcal{G}^\star}_2}
            {(1 - \gamma^0) (1 - \alpha(T) \varepsilon)} \\
    &\stackrel{(g)}{=} \frac{J^\star +
            L \norm{Q^{1/2}}_F \norm{\mbf{\Delta}_w}_2
            \norm{\mcal{G}^\star}_2}
            {(1 - \gamma^0) (1 - \alpha(T) \varepsilon)}
    \end{align*}
    
    where we use the relations
    \begin{align*}
        \mcal{G}_0 = \mcal{G}^\star (I + \mcal{D})^{-1},
        \quad
        \begin{bmatrix} \PHI_x ^\star \\ \PHI_u ^\star \end{bmatrix}
        = \begin{bmatrix}
            \OZ (I_L \otimes \mcal{H}_L (x)) \\
            \OZ (I_L \otimes \mcal{H}_L (u))
        \end{bmatrix} \mcal{G}^\star,
        \quad
        \norm{(I + \mcal{D})^{-1}}_2 \leq \frac{1}{1 - \alpha(T){\varepsilon}}
    \end{align*}
    as follows: (a) is obtained from optimality of $(\hat{\mcal G},\hat\gamma)$ and by substituting in the feasible solution $(\mcal G^0,\gamma^0)$, (b) from exploiting the compatibility of the Frobenius and spectral norm ($\|AB\|_F\leq \|A\|_2\|B\|_F$) and bounding $\|(I-\mcal D)^{-1}\|_2\leq (1-\alpha(T)\varepsilon)^{-1}$, (c) from Lemma~\ref{lem:noisy-hankel-dynamics} where we let \(\mbf{\Delta}_w := \mcal{T}_L (I) \mcal{H}_L (w) + \mcal{O}_L (A) \mcal{W}_{[0, T - L]})\), (d) from the triangle inequality, the definition of $J^\star$, the compatiblity of the Frobenius and spectral norms, and by expanding out the inner multiplication $\mcal{Z}_L (I_L \otimes \Delta_w)$, (e) by noting that $\mcal Z^k \mcal Q^{1/2} \Delta_w$ is a submatrix $\mcal Q^{1/2} \Delta_w$ and that $\|X_0, X_1, \dots, X_{L-1}\|_F = \sqrt{L}\|X_1\|_F$ when $X_i = X_0$ for all $i=0,\dots, L-1$, (f) the compatiblity of the Frobenius and spectral norms and (g) by noting that $\mcal Q^{1/2} = I_L \otimes Q^{1/2}$.
    
    Furthermore, \(\norm{\mbf{\Delta}_w}_2\)
    can be bound using Lemma~\ref{lem:feas-soln}
    and the fact that
    \(\norm{\mcal{T}_L}_2 \leq \norm{\mcal{T}_{T - L + 1}}_2\):
    \begin{align*}
    \norm{\mbf{\Delta}_w}_2
    \leq
        \norm{\mcal{T}_L (I) \mcal{H}_L (w)}_2
            +
        \norm{\mcal{O}_L (A) }_2
        \norm{\mcal{T}_{T - L + 1} (I)}_2 
        \norm{w_{[0, T - L]}}_2
 \leq (1 + \norm{\mcal{O}_L (A)}_2) \norm{\mcal{T}_{T - L + 1} (I)}_2 \varepsilon
    \end{align*}
    This further simplifies the \(\hat{J}\) bound to
    \begin{align*}
        \hat{J} \leq
            \frac{J^\star}{(1 - \gamma^0) (1 - \alpha(T) \varepsilon)}
            + \frac{L (1 + \norm{\mcal{O}_L (A)}_2) \norm{Q^{1/2}}_F
            \norm{\mcal{T}_{T - L + 1} (I)}_2
            \norm{\mcal{G}^\star}_2
            \varepsilon}
            {(1 - \gamma^0)(1 - \alpha(T)\varepsilon)}
    \end{align*}
    
    Simplifying the above and rearranging the above yields
    \begin{align*}
        \frac{J - J^\star}{J^\star}
            \leq \frac{\gamma^0 + \alpha(T){\varepsilon}}{(1 - \gamma^0) (1 - \alpha(T){\varepsilon})}
            + \frac{L (1 + \norm{\mcal{O}_L (A)}_2) \norm{Q^{1/2}}_F
            \norm{\mcal{T}_{T - L + 1} (I)}_2
            \norm{\mcal{G}^\star}_2
            \varepsilon}
            {(1 - \gamma^0)(1 - \alpha(T)\varepsilon) J^\star}
    \end{align*}
    Recall that \(\gamma^0 = 2\varepsilon\twonorm{\mcal G^\star}\sqrt{ L} \),
    and by the assumptions of the Theorem we have that \(\alpha(T){\varepsilon} \leq 1/2\) and $\gamma^0 \leq 2/3$; thus
    \begin{align*}
        \frac{\gamma^0 + \alpha(T){\varepsilon}}
                    {(1 - \gamma^0) (1 - \alpha(T){\varepsilon})}
        \leq 6 \varepsilon (2\twonorm{\mcal G^\star}\sqrt{ L} + \alpha(T)).
    \end{align*}
    Similarly, the second term can be bound as
\(6 L (1 + \norm{\mcal{O}_L (A)}_2) \norm{Q^{1/2}}_F
            \norm{\mcal{T}_{T - L + 1} (I)}_2
            \norm{\mcal{G}^\star}_2
            \varepsilon / J^\star\).
\end{proof}

\subsection{Sample Complexity}
\label{sec:complexity}
We now show \emph{trajectory averaging} can be used to mitigate the effects of noise on the data $\mcal H_L(\tilde x)$: as system \eqref{eq:lti} is LTI, given a collection of independently collected signals $\{\xtt^{(i)},\ut^{(i)},w_{[-1,T-2]}^{(i)}\}_{i=1}^N$, the averages $\bar x_{[0,T-1]} = \frac{1}{N} \sum_{i = 1}^{N} \xtt^{(i)},
    \,
    \bar u_{[0,T-1]} = \frac{1}{N} \sum_{i = 1}^{N} \ut^{(i)},
    \,
    \bar w_{[-1,T-2]} = \frac{1}{N} \sum_{i = 1}^{N} w_{[-1,T-2]}^{(i)},$
are themselves valid.  Importantly, due to the effects of averaging, we now have that the elements $\bar w(t) \iid \mathcal{N}(0,\tfrac{\sigma^2}{N}I_n).$  We then have the following result, which follows from matrix Gaussian series concentration inequalities \cite[Theorem 4.1.1]{tropp2012user}.

\begin{lemma} \label{lem:hankel-bound}
Let $\bar w_{[-1,T-2]}=\tfrac{1}{N}\sum_{i=1}^N w_{[-1,T-2]}^{(i)}$.
Then for all \(t \geq 0\),
\begin{equation}\label{eq:hankel-bound}
\mathbb P\left[\twonorm{\mcal H_L(\bar w)}\geq t \right] \leq 2nT e^{-\frac{t^2 N}{2\sigma^2 nT}}
\end{equation} 
\end{lemma}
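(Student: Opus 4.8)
The plan is to view $\mcal{H}_L(\bar w)$ as a \emph{matrix Gaussian series} and invoke \cite[Theorem 4.1.1]{tropp2012user} directly. By the averaging argument preceding the lemma, the samples $\{\bar w(s)\}$ are mutually independent, each distributed as $\mathcal{N}(0, \tfrac{\sigma^2}{N} I_n)$ (and if the embedded initial condition $\bar w(-1)$ is taken to be deterministically zero, this only helps, since it merely removes atoms from the series below). Writing $\bar w(s) = \tfrac{\sigma}{\sqrt{N}} \sum_{c=1}^{n} \gamma_{s,c}\, e_c$ with $\gamma_{s,c} \iid \mathcal{N}(0,1)$ and $e_1,\dots,e_n$ the standard basis of $\R^n$, linearity of the order-$L$ Hankel map gives
\[
  \mcal{H}_L(\bar w) = \frac{\sigma}{\sqrt{N}} \sum_{s,c} \gamma_{s,c}\, B_{s,c},
\]
where $B_{s,c} \in \R^{nL \times (T - L + 1)}$ denotes the Hankel matrix of order $L$ of the signal that equals $e_c$ at time $s$ and is zero elsewhere; each $B_{s,c}$ is a fixed $0/1$ matrix supported on a single anti-diagonal of the Hankel pattern.

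The one real computation to carry out is the matrix variance parameter $v := \max\{\, \twonorm{\sum_{s,c} B_{s,c} B_{s,c}^\top},\ \twonorm{\sum_{s,c} B_{s,c}^\top B_{s,c}}\,\}$. The key structural fact is that two distinct cells lying on the same anti-diagonal of a Hankel matrix occupy distinct rows \emph{and} distinct columns; consequently each $B_{s,c} B_{s,c}^\top$ and each $B_{s,c}^\top B_{s,c}$ is a diagonal $0/1$ matrix, so all cross terms vanish. Summing over $(s,c)$ and counting multiplicities --- every block-row of $\mcal{H}_L(\bar w)$ contains $T-L+1$ cells and every block-column contains $L$ cells --- one obtains $\sum_{s,c} B_{s,c} B_{s,c}^\top = (T-L+1)\, I_{nL}$ and $\sum_{s,c} B_{s,c}^\top B_{s,c} = nL\, I_{T-L+1}$. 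Hence the variance parameter of $\mcal{H}_L(\bar w)$ equals $\tfrac{\sigma^2}{N}\max\{T-L+1,\, nL\} \le \tfrac{\sigma^2 nT}{N}$, using $L \le T$.

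It then remains to apply the Gaussian-series tail bound of \cite[Theorem 4.1.1]{tropp2012user} in its rectangular form (obtained via the Hermitian dilation, of size $nL + (T-L+1) \le (n+1)T \le 2nT$): this yields, for every $t \ge 0$,
\[
  \mathbb{P}\!\left[\twonorm{\mcal{H}_L(\bar w)} \ge t\right]
  \ \le\ \big(nL + T - L + 1\big)\, \exp\!\Big(-\tfrac{t^2}{2v}\Big)
  \ \le\ 2nT\, \exp\!\Big(-\tfrac{t^2 N}{2\sigma^2 n T}\Big),
\]
which is exactly \eqref{eq:hankel-bound}. The only non-routine step is the variance calculation of the second paragraph --- verifying the cancellation of the cross terms and correctly counting how often each coordinate of $\bar w$ is reused across the rows and columns of the Hankel matrix; the remainder is bookkeeping together with a black-box application of the concentration inequality.
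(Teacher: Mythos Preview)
Your proof is correct. Both you and the paper reduce to the same tool (the matrix Gaussian series tail bound, \cite[Theorem~4.1.1]{tropp2012user}), but the executions differ. The paper does not work with $\mcal{H}_L(\bar w)$ directly: it first enlarges the matrix, noting that $\mcal{H}_L(\bar w)$ sits as a submatrix of an $nT\times T$ Hankel matrix $\mcal{H}_T(\tilde w)$ built from a longer signal, so that $\twonorm{\mcal{H}_L(\bar w)}\le\twonorm{\mcal{H}_T(\tilde w)}$; it then decomposes $\mcal{H}_T(\tilde w)$ using block-shift matrices $Z^i H_k$ and computes the variance parameter to be exactly $\sigma^2 nT/N$. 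You instead decompose $\mcal{H}_L(\bar w)$ itself via the anti-diagonal atoms $B_{s,c}$ and compute the variance directly. Your route is more elementary --- it avoids the embedding step and the shift-matrix algebra --- and it yields the sharper intermediate variance $\tfrac{\sigma^2}{N}\max\{T-L+1,\,nL\}$ and dimension $nL+(T-L+1)$, which you then relax to the paper's $\tfrac{\sigma^2 nT}{N}$ and $2nT$. The paper's padding trick buys a slightly cleaner variance computation (the larger Hankel has a more symmetric structure), at the cost of a looser constant; your direct counting argument buys tightness and transparency about where the $T-L+1$ and $nL$ factors come from.
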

\begin{proof}[Proof of Lemma \ref{lem:hankel-bound}]
Let $\tilde w = \frac{1}{N} \sum_{i=1}^N w^{(i)}_{[-1,2T-2]}$: then $\mcal H_{T}(\tilde w)$ is a $nT \times T$ Hankel matrix. To simplify the proof, we bound the spectral norm of $\mcal H_{T}(\tilde w)$, because as $\mcal H_L(\bar w)$ is the top left $nL \times (T-L)$ sub-matrix of $\mcal H_{T}(\tilde w)$, it follows that $\twonorm{\mcal H_L(\bar w)}\leq \twonorm{\mcal H_T(\tilde w)}$.

We write $\mcal H_T(\tilde w)=\sum_k v_k B_k$, where the $v_k\iid \mathcal N(0,1)$, and the $\{B_k\}$ are a finite sequence of fixed matrices of dimension $nT \times nT$.  In particular, letting $v_{[-1,T-2]}\ed \frac{\sqrt{N}}{\sigma}\bar w_{[-1,T-2]}$, $v_{i,j}\iid \mathcal{N}(0,1)$ denote the $j$th component of $v(j)$, and
\[
H = \begin{bmatrix}& & 1 \\ & \iddots & \\ 1 & & \end{bmatrix} \in \R ^{T \times T}, \
Z = \begin{bmatrix} & & & \\
1 & & &\\
& \ddots & & \\
& & 1 &
\end{bmatrix} \in \R ^{nT \times nT}, \ H_i = \frac{\sigma}{\sqrt{N}}H \otimes e_i,\]
where $e_i \in \R^n,\, i=1,\dots, n$ is the standard basis element.  One can then check that
\begin{equation}\label{eq:hankel-decomp}
    \mcal H_{T}(\tilde w) \ed \sum_{k=1}^n v_{0,k} H_k +\sum_{i=1}^{T-1}\sum_{k=1}^n v_{i,k} Z^i H_k.
\end{equation}
By Theorem 4.1.1 of \cite{tropp2012user}, we have that
\[
\mathbb P\left[\twonorm{\mcal H_L(\bar w)}\geq t \right] \leq 2nT e^{-\frac{t^2}{2\nu(\mcal H_{T}(\tilde w))}},
\]
where $\nu(\mcal H_{T}(\tilde w)):=\max\left\{\mathbb E \twonorm{\mcal H^\top_{T}(\tilde w)\mcal H_{T}(\tilde w)},E \twonorm{\mcal H_{T}(\tilde w)\mcal H^\top_{T}(\tilde w)}\right\}$.  Thus it suffices to show that $\nu(\mcal H_T(\tilde w)) \leq nT\sigma^2/N$ and the result follows immediately.  Using the following identities,
\[
H_k^\top Z^i(Z^\top)^iH_k = \sum_{j=1}^{nT-i}e_je_j^\top, \enskip H_k^\top (Z^\top)^iZ^iH_k = \sum_{j=i+1}^{nT}e_je_j^\top,\]
\[ (Z^\top)^iH_k H_k^\top Z^i = \left(\sum_{j=1}^{nT-i} e_je_j^\top\right )\otimes e_k e_k^\top, \enskip Z^iH_k H_k^\top  (Z^\top)^i = \left(\sum_{j=i+1}^{nT} e_je_j^\top\right )\otimes e_k e_k^\top,
\]
the decomposition \eqref{eq:hankel-decomp}, and a careful counting argument, one can check that
\[\mathbb E \twonorm{\mcal H^\top_{T}(\tilde w)\mcal H_{T}(\tilde w)} = \norm{\frac{\sigma^2}{N}TI_{nT}}_2 =\frac{\sigma^2T}{N}, \enskip E \twonorm{\mcal H_{T}(\tilde w)\mcal H^\top_{T}(\tilde w)} = \twonorm{nT I_{T}} = \frac{\sigma^2nT}{N},
\]
from which the result follows.
\end{proof}

\begin{remark}
As in \cite{alpago2020extended}, we ensure that the control input is not averaged out by simply setting $\ut^{(i)}\equiv \ut^{(1)}$ for all $i$ such that $\bar u(t) = u^{(1)}(t)$ for all $t$.
\end{remark}

Combining Lemma \ref{lem:hankel-bound} and Theorem \ref{thm:subopt}, we obtain an end-to-end sample-complexity bound.

\begin{corollary}\label{coro:complexity}
If $N \geq 2\sigma^2nT\log(2nT/\delta)\max\left\{9L\norm{\mcal{G}^\star}^2_2 ,4\norm{\mcal{G}^\star}^2_2 \norm{\mcal{T}_{T - L + 1} (I)}^2_2\right\},$
then, with probability at least $1-\delta$, we have that

\begin{align*}
    \frac{\hat J - J^\star}{J^\star}
    \leq 
    6\twonorm{\mcal G^{\star}}{\tiny\sqrt{\tfrac{2\sigma^2nT}{N}\log\left(\tfrac{2nT}{\delta}\right)}}\left(2\sqrt{ L} + {\norm{\mcal{T}_{T - L + 1} (I)}_2}\left(1+\tfrac{L(1+\twonorm{\mcal O_L(A)}) \lVert Q^{1/2} \rVert_F}{J^\star}\right)\right)
\end{align*}

\end{corollary}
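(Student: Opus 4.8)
The plan is to instantiate Theorem~\ref{thm:subopt} with the averaged trajectory $\{\bar x_{[0,T-1]}, \bar u_{[0,T-1]}, \bar w_{[-1,T-2]}\}$ playing the role of $\{\tilde x_{[0,T-1]}, \ut, w_{[-1,T-2]}\}$, using the high-probability bound on $\twonorm{\mcal H_L(\bar w)}$ supplied by Lemma~\ref{lem:hankel-bound}. First I would fix
\[
\varepsilon := \sqrt{\tfrac{2\sigma^2 nT}{N}\log\parens{\tfrac{2nT}{\delta}}}
\]
and apply Lemma~\ref{lem:hankel-bound} with $t=\varepsilon$: since $\varepsilon^2 N/(2\sigma^2 nT)=\log(2nT/\delta)$ by construction, the right-hand side of \eqref{eq:hankel-bound} equals $2nT\,e^{-\log(2nT/\delta)}=\delta$, so $\twonorm{\mcal H_L(\bar w)}\leq\varepsilon$ with probability at least $1-\delta$. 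Because the dynamics \eqref{eq:lti} are LTI with $w(-1)=x(0)=0$, the map $\{u,w\}\mapsto x$ is linear, so $\bar x_{[0,T-1]}$ is the genuine state response of \eqref{eq:lti} to input $\bar u = u^{(1)}$ (the inputs are not averaged, per the Remark) and disturbance $\bar w$; moreover $\bar u$ is PE of order $n+L$ by hypothesis, and $\mcal H_{n+L}(\bar w)$ has full rank almost surely, so $\{\bar u,\bar w\}$ is PE of order $n+L$. Hence all the structural hypotheses of Theorem~\ref{thm:subopt} hold for the averaged data, with the same $\mcal G^\star$ (which is defined through the noise-free pair $\{x,u^{(1)}\}$).

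Second, I would verify that this $\varepsilon$ satisfies the requirement \eqref{eq:eps-bound}. Squaring, $\varepsilon\leq 1/(3\sqrt{L}\,\twonorm{\mcal G^\star})$ is equivalent to $N\geq 9L\twonorm{\mcal G^\star}^2\cdot 2\sigma^2 nT\log(2nT/\delta)$, and $\varepsilon\leq 1/(2\twonorm{\mcal G^\star}\,\twonorm{\mcal T_{T-L+1}(I)})$ is equivalent to $N\geq 4\twonorm{\mcal G^\star}^2\twonorm{\mcal T_{T-L+1}(I)}^2\cdot 2\sigma^2 nT\log(2nT/\delta)$. The assumed lower bound on $N$ is precisely $2\sigma^2 nT\log(2nT/\delta)$ times the maximum of the two bracketed quantities, so both parts of \eqref{eq:eps-bound} hold; together with the standing assumption $T\geq 2L+1$, Theorem~\ref{thm:subopt} applies verbatim on the event $\{\twonorm{\mcal H_L(\bar w)}\leq\varepsilon\}$.

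Third, on that event Theorem~\ref{thm:subopt} yields
\[
\frac{\hat J-J^\star}{J^\star}\leq 6\twonorm{\mcal G^\star}\varepsilon\parens{2\sqrt{L}+\twonorm{\mcal T_{T-L+1}(I)}+\frac{L(1+\twonorm{\mcal O_L(A)})\norm{Q^{1/2}}_F\twonorm{\mcal T_{T-L+1}(I)}}{J^\star}}.
\]
Factoring $\twonorm{\mcal T_{T-L+1}(I)}$ out of the last two summands inside the parentheses and substituting the chosen value of $\varepsilon$ gives exactly the claimed inequality; since the underlying event has probability at least $1-\delta$, this completes the argument.

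I do not expect a genuine obstacle here: the proof is essentially bookkeeping, invoking only Lemma~\ref{lem:hankel-bound} and Theorem~\ref{thm:subopt} with no new estimates. The one point that needs care is the second sentence of the first step — checking that averaging commutes with the (linear) dynamics and preserves persistence of excitation of the input (because the inputs are held fixed across experiments), so that the averaged signals are legitimately of the form required by Theorem~\ref{thm:subopt} with an unchanged $\mcal G^\star$ — together with matching the two cases of the minimum in \eqref{eq:eps-bound} to the two terms inside the $\max$ in the sample-size condition.
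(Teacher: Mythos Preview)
Your proposal is correct and follows essentially the same approach as the paper: invert the concentration inequality from Lemma~\ref{lem:hankel-bound} to obtain $\twonorm{\mcal H_L(\bar w)}\leq\sqrt{2\sigma^2 nT\log(2nT/\delta)/N}$ with probability at least $1-\delta$, check that the assumed lower bound on $N$ makes this $\varepsilon$ satisfy \eqref{eq:eps-bound}, and then invoke Theorem~\ref{thm:subopt}. The paper's own proof is in fact terser than yours---it does not spell out the linearity/PE verification for the averaged signals or the squaring argument matching the two cases of \eqref{eq:eps-bound} to the $\max$ in the sample-size condition---but the logical route is identical.
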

\begin{proof} 
Inverting the probability bound \eqref{eq:hankel-bound}, we have that with probability at least $1-\delta$ that$\twonorm{\mcal H_L(\bar w)}\leq \sqrt{\frac{2\sigma^2nT}{N}\log\left(\frac{2nT}{\delta}\right)}.$  Thus, we have that the bounds \eqref{eq:eps-bound} are satisfied under the assumptions of the Corollary, proving the result by combining the above bound with Theorem \ref{thm:subopt}.
\end{proof}

\section{Experiments}
\label{sec:experiments}

We present experiments on the system from \cite{dean2019sample}
\begin{align*}
    A = {\begin{bmatrix} 1.01 & 0.01 & 0.00 \\ 0.01 & 1.01 & 0.01 \\ 0.00 & 0.01 & 1.01 \end{bmatrix}},
    \
    B = I, \ \sigma^2 = 0.1,\
    Q = 10^{-3} I, \ R = I,
\end{align*}
which corresponds to a slightly unstable graph Laplacian system
with input significantly more penalized than the output.

\begin{wrapfigure}{r}{.4\textwidth}
\centering
\includegraphics[width=.35\textwidth]{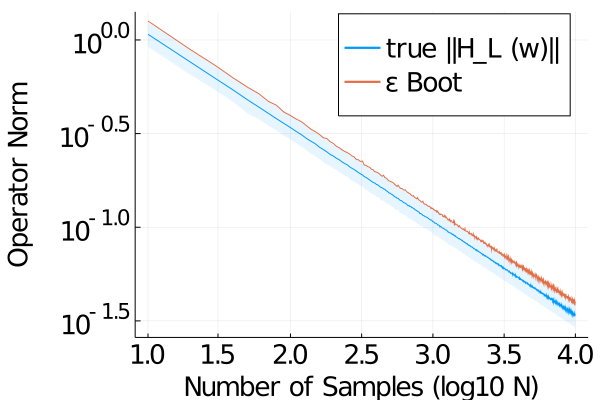}
\caption{ Bootstrap Estimation of \(\varepsilon\):
        The solid red line is the 95-th percentile bound on the bootstrapped estimate of $\twonorm{\mcal H_L(\bar w)}$ over \(1000\) trials. In blue are the median, 5-th, and 95-th percentiles of $\twonorm{\mcal H_L(\bar w)}$ computed across an additional \(1000\) independent trials.}
        \label{fig:boot}
\end{wrapfigure}

All experiments were done in Julia v1.3.1 using the JuMP v0.21.2 library with
MOSEK v9.2.9 as a backend.
Due to the structure of the resulting semidefinite programs, we found it effective to solve the dual problem using
JuMP's \code{dual\_optimizer} function.\footnote{All code is open source and available at \href{https://github.com/unstable-zeros/data-driven-sls}{https://github.com/unstable-zeros/data-driven-sls}.}

\paragraph{Bootstrap Estimation of Noise}
We use a vanilla bootstrap method \cite{chernick2011bootstrap} to empirically estimate confidence bounds on $\twonorm{\mcal H_L(\bar w)}$, for \(L = 10\) and \(T = 45\),
as a function of the number of trajectory samples $N$: Fig.~\ref{fig:boot} shows the bootstrap estimated 95-th percentile bound compared to true 95-th percentile over $1000$ trials.

\paragraph{Controller Performance in MPC Loop:}
We consider the performance of four types of unconstrained MPC controllers based on the following finite-horizon LTV feedback gains:
\(\mcal{K}^\star\) the optimal LQG controller synthesized with noise-free data;
\(\mcal{K}_B\) and \(\mcal{K}_T\) the robust controllers synthesized using the bootstrap value of
\(\varepsilon_B\) and true \(\varepsilon = \twonorm{\mcal{H}_L (w)}\) respectively in problem \eqref{eqn:robust-data-sls-lqr};
and \(\mcal{K}_N\) the naive controller is synthesized by dropping the
robustness constriant in problem~\eqref{eqn:robust-data-sls-lqr}. For the selected values of
\(N\), random trajectories \(\{\xtt^{(i)}, \ut^{(i)}, w_{[-1,T-2]}^{(i)}\}\) of length $T=45$ are generated
and used to form the appropriate Hankel matrices
\(\mcal{H}_L (\bar{\tilde{x}}), \mcal{H}_L (\bar{u}), \mcal{H}_L (\bar w)\), by averaging trajectories, where we set $u^{(1)}(t)\iid\mcal N(0,I)$, and replay $u^{(1)}$ in all subsequent trials. Each finite-horizon controller is then synthesized with the running cost matrices $(Q,R)$ specified above, and with no constraints.  In order to remove the effects of the terminal cost $Q_L$ on stability and optimality, we set \(Q_L = P_\star\), for $P_\star$ the solution to the discrete algebraic Riccati equation for the infinite horizon LQG problem specified in terms of $(A,B,Q,R)$, thus ensuring that $\mcal K^\star$ is both stabilizing and equal to the optimal infinite horizon LQG controller.  An MPC loop is then implemented over a horizon of \(H = 1000\) time-steps starting from an initial state \(x(0) = 0\) with $w(t)\iid \mcal N(0, \sigma^2 I)$ driving noise.  For robust controllers, in order to improve computational complexity, we constrain the $\hat{\mcal G}$ to be block-diagonal, with block diagonal elements $\hat G(i,i) \in \Gamma(\tilde x)$.  As off-block diagonal elements, they trivially satisfy the null-space constraints.  In doing so, we are restricting ourselves to a subset of robust achievable closed-loop responses as specified in Theorem \ref{thm:noisy-data-sls}.

We evaluate \(50\) trials at each value of \(N\),
with empirically computed control costs shown in Figs.~\ref{fig:figure}. 
We omit the nominal controllers, which are not subject to the block-diagonal constraint on the parameter matrix $\hat{\mcal G}$, as they consistently fail to stabilize the system.  While the robust controllers tend to have worse cost than the optimal controller (Fig.~\ref{fig:figure}(Left)),
they achieve better disturbance rejection, as seen by the smaller norm of the state trajectories  (Fig.~\ref{fig:figure}(Middle)), at the expense of larger control effort (Fig.~\ref{fig:figure}(Center)).

\begin{figure}
\centering
   \includegraphics[width=.245\textwidth]{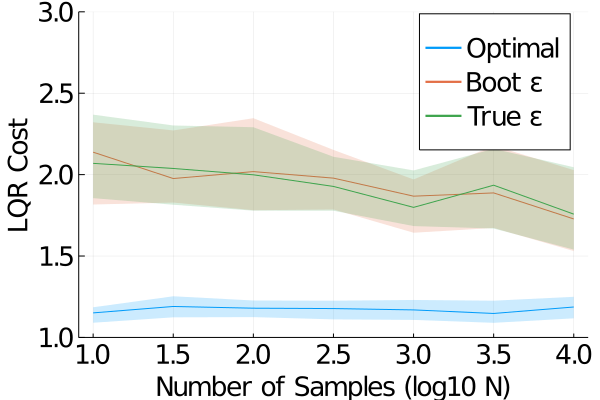}~
    \includegraphics[width=.245\textwidth]{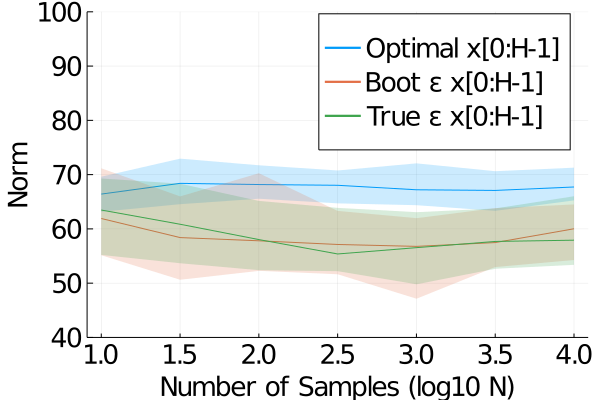}~
    \includegraphics[width=.245\textwidth]{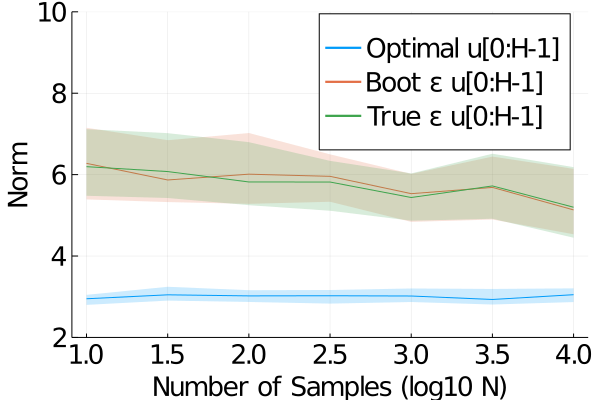}
    \caption{
    (Left) Median and quartiles of MPC controller performances,
    with infeasibility assumed to be \(+\infty\) in cost.
    (Middle, Right) Median and quartiles of running state (Middle) and input (Right) trajectory norm
    \(\twonorm{x_{[0, H-1]}}, \twonorm{u_{[0, H-1]}}\),
    with infeasibility assumed to be \(+\infty\) in norm.}
    \label{fig:figure}
\end{figure}

\section{Conclusion \& Discussion}
\label{sec:conclusion}
We defined and analyzed data-driven SLS parameterizations of stabilizing controllers for LTI systems.  We showed that when given noise free trajectories there exists an exact equivalence between traditional and data-driven SLS.  We then showed that when given noisy system trajectories, tools from robust SLS and matrix concentration theory can be used to characterize the sample-complexity needed to mitigate the effects of noise on closed-loop performance.  An important consequence of our work is a novel regularized formulation penalizing the $\ell_2\to\ell_2$, or $\mcal{H}_\infty$, norm of system responses in order to provide robustness to trajectory data driven by unmeasured noise.  We further demonstrated the importance of explicitly taking uncertainty in the trajectory data into account through experiments, where nominal data-driven methods failed.  

Future work will look to extend these results to the infinite horizon, distributed \& robust MPC, and output-feedback settings.  We believe that the bridge between behavioral and SLS methods will be particularly impactful in the distributed setting, where \emph{localized} control methods will allow for distributed controllers to be synthesized using only locally collected trajecotry data, see for example \cite{wang2018separable}.

\section{Acknowledgements}
The authors would like to thank Karl Schmeckpeper and Alexandre Amice, as well as the anonymous L4DC reviewers, for helpful feedback and comments.

\bibliographystyle{plain} 
\bibliography{sources}


\end{document}